\DeclareMathAlphabet{\mathpzc}{OT1}{pzc}{m}{it}
\title{A Metric Lower Bound Estimate for Geodesics in the Space of K\"ahler Potentials}
\author{Jingchen Hu}
\newtheorem{problem}{Problem}[section]
\newtheorem{Proposition}{Proposition}[section]
\newtheorem{theorem}{Theorem}[section]
\newtheorem{remark}{Remark}[section]
\numberwithin{equation}{section}
\newtheorem{definition}{Definition}[section]
\newcommand{\ER}{\mathbb{R}}
\newcommand{\EC}{\mathbb{C}}
\newcommand{\MR}{\mathcal{R}} % Riemann Surface
\newcommand{\MV}{\mathcal{V}} % the Kahler manifold
\newcommand{\MH}{\mathcal{H}}
\newcommand{\MS}{\mathcal{S}}
\newcommand{\ddbar}{\partial\overline\partial}
\newcommand{\tautaubar}{\tau\overline{\tau}}
\newcommand{\taubar}{{\overline{\tau}}}
\newcommand{\abbar}{\alpha\overline{\beta}}
\newcommand{\covab}{{,\alpha\beta}}
\newcommand{\dzabbar}{dz^{\alpha}\wedge\overline{dz^\beta}}
\newcommand{\ijbar}{i\overline j}
\newcommand{\gammabar}{{\overline{\gamma}}}
\newcommand{\alphabar}{{\overline{\alpha}}}
\newcommand{\thetabar}{{\overline{\theta}}}
\newcommand{\betabar}{{\overline{\beta}}}
\newcommand{\mubar}{{\overline{\mu}}}
\newcommand{\zetabar}{{\overline{\zeta}}}
\newcommand{\psibar}{{\overline{\psi}}}
\newcommand{\etabar}{{\overline{\eta}}}
\newcommand{\lambdabar}{{\overline{\lambda}}}
\newcommand{\phibar}{{\overline{\phi}}}
\newcommand{\xibar}{{\overline{\xi}}}
\newcommand{\rhobar}{{\overline{\rho}}}
\newcommand{\jbar}{{\overline{j}}}
\newcommand{\ibar}{{\overline{i}}}
\newcommand{\bea}{\begin{align}}
\newcommand{\ena}{\end{align}}
\newcommand{\ConstA}{C_{An}}
\newcommand{\ConstB}{C_{Bn}}
\newcommand{\ConstBbar}{{C_{\overline B}}}
\newcommand{\ConstG}{C_{Gn}}
\newcommand{\CR}{C_R}
\newcommand{\TCB}{{\widetilde{C_B}}}
\newcommand{\TCA}{{\widetilde{C_A}}}
\newcommand{\TCG}{{\widetilde{C_G}}}
\newcommand{\HCG}{{\hat{C_G}}}
\newcommand{\TC}{{\widetilde{C}}}
\newcommand{\const}{\hat{C}}
\newcommand{\diameterR}{{d_\MR}}
\newcommand{\theoremdelta}{{\hat\delta}}
\newcommand{\dzbar}{{\overline{dz}}}
\newcommand{\zzbar}{{z\overline{z}}}
\newcommand{\zbar}{{\overline z}}
\newcommand{\bbar}{{\overline{b}}}
\newcommand{\X}{X}
\newcommand{\Xbar}{{\overline{X}}}
\newcommand{\cX}{{,X}}
\newcommand{\cXbar}{{,\overline{X}}}
\newcommand{\tr}{\text{tr}}
\newcommand{\Abar}{{\overline{A}}}
	\newcommand{\Bbar}{{\overline{B}}}
	\newcommand{\MP}{\mathcal{P}}
	\newcommand{\MU}{{\mathcal{U}}}
	\newcommand{\ML}{{\mathcal{L}}}
\begin{document}
\maketitle

\begin{abstract}
In this paper we establish a positive lower bound estimate for the second smallest eigenvalue of the complex Hessian of solutions to a degenerate complex Monge-Amp\`ere equation. As a consequence,  we find that in the space of K\"ahler potentials any two points close to each other in  $C^2$ norm can be connected by a  geodesic along which the associated metrics  do not degenerate.
\end{abstract}

\section{Introduction}\label{sec:intro}
%In section {}, we introduce the backgroud of our research and present
\subsection{Backgroud and Subject}\label{sec:sub}
In this paper we will be interested in the solution to degenerate complex Monge-Amp\`ere equations, which play important roles in K\"ahler geometry.

For a compact K\"ahler manifold $\MV$ with a metric $\omega_0$,  we define the following space of K\"ahler potentials, 
\begin{align}\MH=\{\varphi\in C^{\infty}(\MV)|\omega_0+\sqrt{-1}\ddbar\varphi>0\}.
\end{align}
Then a Riemannian metric can be introduced on this infinite dimensional space, for $\psi_1,\psi_2\in T_{\varphi}\MH$
\begin{align}\label{Hinnerproduct}<\psi_1, \psi_2>_\varphi=\int_\MV\psi_1\psi_2 \left(\omega_0+\sqrt{-1}\ddbar\varphi\right)^n.
\end{align}
Under this metric (\ref{Hinnerproduct}), a function $\Phi:[0,1]\times \MV\rightarrow \ER$, with $\Phi(t, \ast)\in \MH$ for each $t\in[0,1]$, is called a geodesic, if it satisfies
\begin{align}\label{realgeodesiceq}
\Phi_{tt}=\Phi_{t\betabar}g^{\alpha\betabar}\Phi_{\alpha t} , \ \ \ \ \ \ &\text{in\ \ \ \ } [0,1]\times \MV.
\end{align}
where $g_{\alpha\betabar}=\omega_{0,\alpha\betabar}+\Phi_{\alpha\betabar}$.
If we define $\mathcal{S}=\{\zeta\in\EC|  0 < \text{Re}(\zeta) < 1\}$, and consider a function $\Phi$ on $[0,1]\times\MV$ as a function on $\mathcal{S}\times \MV$, so that $\Phi(\zeta, \ast)$ does not depend on $\text{Im}(\zeta)$, then (\ref{realgeodesiceq}) is equivalent to the following homogenous complex Monge-Amp\`ere equation:
\begin{align}
\left(\Omega_0+\sqrt{-1}\ddbar\Phi\right)^{n+1} =0,\ \ \ \ \ \ &\text{in\ \ \ \ } \MS\times \MV.
\end{align}
Here if we denote the projection $\MR\times \MV\rightarrow \MV$ by $\pi_\MV$, then $\Omega_0=\pi_{{\MV}}^\ast \omega_0$.
This leads to the study of the following problem
\begin{problem}[Geodesic Problem in the Space of K\"ahler Potentials]
\label{ProblemGeodesic}
Given a K\"ahler manifold $(\MV, \omega_0)$ and two functions $\varphi_0, \varphi_1\in \MH$,  find a solution to the following Dirichlet boundary value problem on the space $\MS\times \MV$
\begin{align}
&\left(\Omega_0+\sqrt{-1}\ddbar\Phi\right)^{n+1}=0,& &\text{in\ \ \ \ } \MS\times \MV;\label{eq:geodesicwedgedeter}\\
&\ \ \ \ \ \ \ \ \Phi=\varphi_1,&  & \text{on\ \ }\{\text{Re}(\zeta)=1\}\times \MV;\\
&\ \ \ \ \ \ \ \ \Phi=\varphi_0,&  & \text{on\ \ }\{\text{Re}(\zeta)=0\}\times \MV;\\
&\Omega_0+\sqrt{-1}\ddbar\Phi \geq 0, &  &\text{in\ \ \ \ } \MS\times \MV,
\end{align}
and $\Phi$ is independent of $\text{Im}(\zeta).$
\end{problem}

\begin{remark}
	Solutions to Problem \ref{ProblemGeodesic} may not satisfy $ \Phi(\zeta, \ast)\in \MH$, for any $\zeta\in \MS$. So a solution is considered as a generalized(or weak) geodesic connecting $\varphi_0$ and $\varphi_1$.
\end{remark}
If we replace the infinite strip $\MS$ by a compact Riemann surface $\MR$, then Problem \ref{ProblemGeodesic} becomes: %\cite{{DonaldsonSymmetricSpace}}.
\begin{problem}[Dirichlet Problem for the Homogenous Monge-Amp\`ere Equation on a Product Space]
\label{ProblemHomoDisc}
Given a compact Riemann surface with boundary $\MR$, a K\"ahler manifold $(\MV, \omega_0)$ and a function $F\in C^{\infty}(\partial \MR\times \MV)$, which satisfies 
\begin{align}
\omega_0+\sqrt{-1}\ddbar F(\tau, \ast)>0, \ \ \text{for any $\tau \in \partial \MR$,}
\end{align}  find a solution to the following Dirichlet boundary value problem 
\begin{align}
\left(\Omega_0+\sqrt{-1}\ddbar\Phi\right)^{n+1}&=0, &\text{in\ \ \ \ } \MR\times \MV;\label{eq:wedgedeter}\\
\Phi&=F,& \text{on\ \ }\partial\MR\times \MV;\\
\Omega_0+\sqrt{-1}\ddbar\Phi &\geq 0, &\text{in\ \ \ \ } \MR\times \MV.
\end{align}
\end{problem}
\begin{remark}
Problem \ref{ProblemGeodesic} can be reduced to Problem \ref{ProblemHomoDisc} with $\MR=\{\tau| 1<|\tau|<2\}$, because there is a holomorphic covering map from $\MS$ to $\MR=\{\tau| 1<|\tau|<2\}$. 
\end{remark}

Problems \ref{ProblemGeodesic} and \ref{ProblemHomoDisc} are introduced in \cite{Semmes}\cite{SemmesGelfand} and \cite{DonaldsonSymmetricSpace}.
% The existence of weak solutions, which are maximal psh functions with $C^{1,1}$ interior regularity, can be derived by methods of \cite{BedfordTaylor} (see also section 4 of \cite{Klimek}).
 The global $C^{1,\overline{1}}$ regularity of the solution is established in \cite{Chen2000}. The result is later complemented by Blocki \cite{Blocki}, by showing that $\Phi$ is $C^{1,1}$ providing $(V, \omega_0)$ has non-negative bisectional curvature. In \cite{Berman} it's shown that, for any $\tau\in\MR$, $\Phi(\tau,\ast)$ has uniform $C^{1,1}$ bound, providing $\omega_0$ is integral. The complete $C^{1,1}$ regularity is established by \cite{Jianchun}\cite{JianchunFirst}. In \cite{LempertLizVivas} and \cite{LempertDarvas}, it is shown that $C^{1,1}$ is the optimal regularity for general solutions to Problem \ref{ProblemGeodesic}. 
 
 Even the optimal regularity for general solutions is known, it's still interesting to investigate if, in some situations, we can establish higher order regularity results.  Theorem 1 of \cite{DonaldsonHolomorphicDiscs} says that when the Riemann surface $\MR$ is a disc, the set of smooth functions $F$ for which a smooth solution to Problem \ref{ProblemHomoDisc} exists is open in  $C^{\infty}(\MR\times \MV)$. The method cannot be directly applied to other cases, where the Riemann surfaces are not discs. Similar ideas are used in \cite{Lempertmetrique} to construct smooth pluri-complex Green functions in a convex domain. In \cite{CFH}, by applying ideas of \cite{DonaldsonHolomorphicDiscs} and \cite{Lempertmetrique}, we show that for Problem \ref{ProblemGeodesic}, if the boundary values have small $C^5$ norm then the solution is $C^4$. In the opposite direction, in \cite{HuIMRN}, we construct a family of analytic functions $\varphi_k$, $k=1,2,...$, on $\MV$, and $|\varphi_k|_l\rightarrow 0$, for any $l>0$, but none of the $\varphi_k$ can be connected with $0$ by a smooth geodesic.

Another question for the degenerate Monge-Amp\`ere equation is whether the Hessian of a solution has maximal rank. In particular, for Problems \ref{ProblemGeodesic} and  \ref{ProblemHomoDisc}, we want to know, if for any $\tau\in \MS\ ( \text{or }\MR)$, 
\[\omega_0+\sqrt{-1}\ddbar\Phi(\tau,\ast)>0.\]
 This is shown to be wrong for general solutions. In \cite{RossNystrom}, when the Riemann surface is a disc, a solution $\Phi$ to Problem \ref{ProblemHomoDisc} is constructed, which satisfies
\[\Omega_0+\sqrt{-1}\ddbar\Phi=0,\]
in an open set in $\MR\times \MV$. However in this paper we will show a positive result when the boundary value $F$ has small $C^2$ norm. To present our result, we need to introduce the following approximation to Problem \ref{ProblemHomoDisc}:
\begin{problem}[Dirichlet Problem for the Non-Degenerate Monge-Amp\`ere Equation on a Product Space]
\label{ProblemDisc}
Given a bounded domain  with smooth boundary $\MR\subset \EC$, a K\"ahler manifold $(\MV, \omega_0)$ and a function $F\in C^{\infty}(\partial \MR\times \MV)$, which satisfies for any $\tau \in \partial \MR$
\begin{align}
\omega_0+\sqrt{-1}\ddbar F(\tau, \ast)>0, 
\end{align}find a solution to the following Dirichlet boundary value problem 
\begin{align}
\left(\Omega_0+\sqrt{-1}\ddbar\Phi\right)^{n+1}&=\epsilon \sqrt{-1}d\tau\wedge\overline { d \tau} \wedge\Omega_0^n, &\text{in\ \ \ \ } \MR\times \MV;\label{eq:wedgedeter}\\
\Phi&=F,& \text{on\ \ }\partial\MR\times \MV;\label{ProblemDiscBdCondidtion}\\
\Omega_0+\sqrt{-1}\ddbar\Phi &\geq 0, &\text{in\ \ \ \ } \MR\times \MV,
\end{align}
Here $\epsilon$ is a positive constant and $\tau$ is the complex coordinate on $\EC$.
\end{problem}
The solution to the problem above exists by the non-degenerate Monge-Amp\`ere equation theory, for example  \cite{GuanComplex} \cite{GT},  and a detailed discussion of this problem can be find in \cite{Blocki}. We know the problem has a smooth solution with
\[\Omega_0+\sqrt{-1}\ddbar\Phi > 0.\]
So for some positive constant $\sigma(\epsilon)$ depending on $\epsilon$,
\begin{align}\omega_0+\sqrt{-1}\ddbar\Phi(\tau, \ast)>\sigma(\epsilon){\omega_0}, \ \ \ \ \text{for all $\tau\in\MR$}.\label{quotientJune14}
\end{align}
% The $C^2$ estimate can be established by \cite{GuanComplex} or \cite{Chen2000}\cite{Jianchun}, then we use the weak Harnack estimates to prove the $C^{2,\alpha}$ estimates, $\alpha>0$,  \cite{GT}. Higher order estimates follow by using the bootstrap technique. However, the $C^{2,\alpha}$ estimates have to depend on $\epsilon.$
But the lower bound estimate (\ref{quotientJune14}), established using former elliptic theory, vanishes as $\epsilon\rightarrow 0$.
 In this paper, under some conditions, we want to establish a positive lower bound estimate for 
 \[\omega_0+\sqrt{-1}\ddbar\Phi(\tau, \ast)\]
which does not vanish as $\epsilon\rightarrow 0$. This leads to an estimate in the limiting case of $\epsilon=0,$ i.e. Problem \ref{ProblemDisc} and Problem \ref{ProblemGeodesic}.

%%%%%%%%%%%%%%%%%%%%%%%%%%%%%%%%%%%%
\subsection{Main Results of the Paper}\label{sec:result}
We will prove
\begin{theorem}[Metric Lower Bound Estimates for Monge-Amp\`ere Equations]\label{TheoremDisc}
Given a K\"ahler manifold $(\MV, \omega_0)$, a bounded domain with smooth boundary $\MR\subset \EC$ and a function $F\in C^{\infty}(\partial \MR\times \MV)$, there is a constant $\theoremdelta$ depending only on the dimension of $\MV$, the curvatures of $\MV$ and  their covariant derivatives, so that if 
\begin{align}0<\epsilon<\frac{\theoremdelta}{(\text{diameter of $\MR$})^2},
\end{align}and
\begin{align}
|F(\tau, \ast)|_{C^2}<\theoremdelta, \text{\ \ \ for all }\tau\in\partial\MR,
\end{align}
then the solution $\Phi$ to Problem \ref{ProblemDisc} satisfies
\[\omega_0+\sqrt{-1}\ddbar\Phi(\tau, \ast)
>\frac{1}{2} {\omega_0}, \text{\ \ \ \ \ \ \ \ \ for all \ \ \ }\tau\in\MR.\]
When measuring the diameter of $\MR$, we use the metric 
\[ds^2=\frac{1}{2}(d\tau\otimes \overline{d\tau}+\overline{d\tau}\otimes d\tau).\]
\end{theorem}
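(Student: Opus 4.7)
My plan is to bound the smallest eigenvalue of the $\MV$-direction complex Hessian of $\Phi$ from below via a maximum-principle argument. For $(\tau,z)\in\MR\times\MV$ set $W(\tau,z)=\omega_0+\sqrt{-1}\ddbar\Phi(\tau,\ast)$, and let $\mu(\tau,z)$ denote its smallest eigenvalue with respect to $\omega_0$; the target is $\mu\geq\tfrac12$ throughout $\MR\times\MV$. On $\partial\MR\times\MV$ we have $\Phi=F$, so the hypothesis $|F(\tau,\ast)|_{C^2}<\theoremdelta$ immediately yields $\mu\geq 1-\const\theoremdelta\geq\tfrac34$ once $\theoremdelta$ is small enough in terms of $(\MV,\omega_0)$. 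The substance of the theorem is therefore to show that $\mu$ cannot drop below $\tfrac12$ in the interior of $\MR\times\MV$.

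For the interior argument I would consider an auxiliary function of the form
\begin{align*}
Q(\tau,z)=-\log\mu(\tau,z)+A\,\epsilon\,|\tau|^2-B\,\Phi(\tau,z),
\end{align*}
with $A,B$ positive constants depending only on $(\MV,\omega_0)$. Let $L=g^{\alpha\betabar}\partial_\alpha\partial_{\betabar}$ be the linearised Monge-Amp\`ere operator associated with $g_{\alpha\betabar}=\Omega_{0,\alpha\betabar}+\Phi_{\alpha\betabar}$ on $\MR\times\MV$. Writing $\mu=W_{1\overline 1}$ locally in a unitary frame for $\omega_0$ that simultaneously diagonalises $W$ with $\mu$ in the first slot, one has
\begin{align*}
L(-\log\mu)=\mu^{-2}\,g^{\alpha\betabar}W_{1\overline 1,\alpha}W_{1\overline 1,\betabar}-\mu^{-1}\,g^{\alpha\betabar}W_{1\overline 1,\alpha\betabar},
\end{align*}
and differentiating the Monge-Amp\`ere equation twice and commuting derivatives expresses $g^{\alpha\betabar}W_{1\overline 1,\alpha\betabar}$ as the sum of a $|\nabla g|^2$-piece designed to pair with the first summand above by a Kodaira-Bochner-style Cauchy-Schwarz, curvature terms of $\omega_0$ traced against $g^{\alpha\betabar}$, and an $\epsilon$-contribution from the right-hand side. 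The extra terms in $Q$ are chosen so that $L(-B\Phi)=B(\tr_g\Omega_0-(n+1))$ grows whenever an $\MV$-direction eigenvalue of $g$ is small, while $L(A\epsilon|\tau|^2)=A\epsilon\,g^{\tau\taubar}$ grows whenever the $\tau$-direction eigenvalue of $g$ is small; together they should dominate at an interior maximum. Because the added terms are bounded above by $A\epsilon(\mathrm{diam}\,\MR)^2+B|\Phi|_{C^0}$, which under the standing hypotheses is comparable to $\theoremdelta$, the boundary bound $-\log\mu\leq-\log\tfrac34$ transfers to the interior with an $O(\theoremdelta)$ loss, giving $\mu\geq\tfrac12$ once $\theoremdelta$ is fixed small.

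The main obstacle will be controlling the interaction between the $\tau$-direction degeneracy and the $\epsilon$-right-hand side. Since $\det g$ is comparable to $\epsilon$, whenever $\mu$ is bounded below the $\tau$-direction eigenvalue of $g$ is of order $\epsilon$, so $g^{\tau\taubar}$ is of order $\epsilon^{-1}$ and amplifies every $\tau$-derivative in $L(-\log\mu)$; those derivatives of $W_{1\overline 1}$ have to be controlled through the Monge-Amp\`ere equation itself rather than through the classical $C^{1,1}$ estimates, which do not shrink with $|F|_{C^2}$. A secondary subtlety is that $\mu$ is not smooth where the minimising eigenspace has multiplicity, which I would handle in the standard way by replacing $\mu$ near the maximum point by $\langle Wv,\bar v\rangle_{\omega_0}$ for a smooth local extension of a minimising unit eigenvector $v$; this majorises $\mu$ and coincides with it at the maximum, so the maximum-principle inequality $LQ\leq 0$ survives. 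Finally, the constants $A$, $B$ and the thresholds have to be organised so that $\theoremdelta$ depends only on $\dim\MV$ and on the curvature of $(\MV,\omega_0)$ together with its covariant derivatives, as the statement of the theorem requires.
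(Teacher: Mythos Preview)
Your plan is a reasonable first instinct, but it diverges substantially from the paper's argument, and as written it has a genuine gap that I do not see how to close.

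\medskip
\textbf{How the paper actually proceeds.} The paper does \emph{not} work with the smallest eigenvalue of $\omega_0+\sqrt{-1}\ddbar\Phi(\tau,\ast)$. Instead it introduces the scalar
\[
Q=\Phi_{,\alpha\beta}\overline{\Phi_{,\gamma\theta}}\,g^{\alpha\thetabar}g^{\beta\gammabar}
  +\Phi_{\alpha\betabar}\Phi_{\gamma\thetabar}\,g^{\alpha\thetabar}g^{\gamma\betabar}
  +\Phi_{\alpha}\overline{\Phi_{\beta}}\,g^{\alpha\betabar},
\]
which simultaneously controls the $(1,1)$ Hessian $A_{\alpha\betabar}=\Phi_{\alpha\betabar}$, the $(2,0)$ Hessian $B_{\alpha\beta}=\Phi_{,\alpha\beta}$, and the gradient. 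It applies not $h^{i\jbar}\partial_i\partial_{\jbar}$ but the rescaling $L=\frac{\epsilon b}{g}\,h^{i\jbar}\nabla_i\nabla_{\jbar}$, for which $L^{\tau\taubar}=1$. A long computation (Sections~2 and~3) shows that when $Q$ is already small and $\tfrac12 b<g<\tfrac32 b$, the third-derivative terms in $LQ_A$, $LQ_B$, $LQ_G$ assemble into the nonnegative quantities $P,E,T$, and the curvature errors are absorbed by $T$ coming from $LQ_G$; the outcome is the clean inequality $LQ\ge -\hat C\,\epsilon\,Q$. One then compares $Q$ with $u(\tau)=\cos(\tfrac{\pi\mathrm{Re}\,\tau}{4d_{\mathcal R}})\cos(\tfrac{\pi\mathrm{Im}\,\tau}{4d_{\mathcal R}})$ via the maximum principle to get $Q\le 2\max_{\partial\mathcal R\times\mathcal V}Q$, and finally closes the a~priori loop by a continuity argument in the boundary data $\lambda F$, $\lambda\in[0,1]$. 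The conclusion $g>\tfrac12 b$ is then read off from $Q<\tfrac{1}{10}$.

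\medskip
\textbf{Where your plan breaks.} The decisive issue is the sign of the third-derivative term. Differentiating the Monge--Amp\`ere equation twice in the $1$-direction gives
\[
h^{i\jbar}h_{1\bar 1,i\jbar}
=h^{i\bar p}h^{q\jbar}\,h_{q\bar p,1}\,h_{i\jbar,\bar 1}+(\log b)_{1\bar 1}+(\text{curvature terms}),
\]
so that in your $L(-\log\mu)$ the dominant contribution is
\[
-\mu^{-1}\,h^{i\bar p}h^{q\jbar}\,h_{q\bar p,1}\,h_{i\jbar,\bar 1},
\]
which is \emph{nonpositive}. For $\log\operatorname{tr}_{\omega_0}g$ or $\log\lambda_{\max}$ this term works for you; for $-\log\mu$ it works against you, and the Cauchy--Schwarz pairing you invoke goes the wrong way: the piece $\mu^{-2}h^{i\jbar}|h_{1\bar 1,i}|^2$ is only one diagonal entry of the full $\mu^{-1}|h_{\cdot\cdot,1}|^2_{h\otimes h}$ and cannot dominate it. Concretely, the summand $-\mu^{-1}(h^{\tau\taubar})^2|h_{\tau\taubar,1}|^2$ is of order $\epsilon^{-2}$, and neither $A\epsilon\,h^{\tau\taubar}\sim A$ nor $B\,\operatorname{tr}_h\Omega_0\sim B\epsilon^{-1}|\Phi_{\tau\alphabar}|^2$ can compensate for it. So at an interior maximum of your $Q$ the inequality $LQ\le 0$ is automatic and yields no information.

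There is a second structural reason your quantity is too small: equation (\ref{equationAfirst}) in the paper shows that $L A_{\theta\gammabar}$ contains the positive term $B_{\theta\mu,\jbar}g^{\mu\zetabar}\overline{B_{\zeta\gamma}}_{,i}L^{i\jbar}$. Any subharmonicity statement for a function built only from $A$ (such as $-\log\mu$) will see this as an uncontrolled source term unless $B=\Phi_{,\alpha\beta}$ is tracked simultaneously; this is precisely why the paper couples $Q_A$ with $Q_B$ (and with $Q_G$, which produces the $T$ term needed to absorb the curvature errors in (\ref{sumcontrol})). Finally, even with the right quantity, the differential inequality holds only under the standing hypothesis that $Q$ is already small, so a direct maximum-principle argument is circular; the paper resolves this with the continuity method in $\lambda\in[0,1]$, which your plan does not include.
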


Theorem \ref {TheoremDisc} implies the following 
\begin{theorem}[Metric Lower Bound Estimate for Geodesics]\label{LowerBdGeodesic}
Given a K\"ahler manifold $(\MV, \omega_0)$, there is a constant $\delta$ depending on the dimension of $\MV$, the curvatures and the covariant derivatives of the curvatures of $\omega_0$, so that
for any $\varphi_0, \varphi_1\in \MH$, with 
\begin{align}
|\varphi_0|_{C^2}+|\varphi_1|_{C^{2}}\leq \delta, 
\end{align}measured by the metric $\omega_0$, the solution $\Phi$ to Problem \ref{ProblemGeodesic} satisfies
\[\omega_0+\sqrt{-1}\ddbar\Phi(\tau, \ast)
\geq\frac{1}{2} {\omega_0}, \text{\ \ \ \ \ \ \ \ \ for all \ \ \ }\tau\in\MS.\]
\end{theorem}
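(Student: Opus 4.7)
The plan is to deduce Theorem \ref{LowerBdGeodesic} from Theorem \ref{TheoremDisc} by combining the covering-map reduction from the strip $\MS$ to a compact annulus with an $\epsilon \to 0$ approximation of the homogeneous Monge-Amp\`ere equation by non-degenerate ones.

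First I would fix $\MR_0 = \{\tau \in \EC : 1 < |\tau| < 2\}$ and use the holomorphic covering $\zeta \mapsto 2^{\zeta}$ from $\MS$ onto $\MR_0$ (the covering mentioned in the remark following Problem \ref{ProblemHomoDisc}), together with the rotation-invariant boundary data $F(\tau,z)=\varphi_0(z)$ on $\{|\tau|=1\}$ and $F(\tau,z)=\varphi_1(z)$ on $\{|\tau|=2\}$. Then $|F(\tau,\cdot)|_{C^2}\leq\delta$ on $\partial \MR_0$ and $\MR_0$ has a fixed Euclidean diameter, so choosing $\delta\leq\theoremdelta$ brings the data into the scope of Theorem \ref{TheoremDisc} uniformly for every $\epsilon\in(0,\theoremdelta/(\text{diam }\MR_0)^2)$.

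Next I would apply Theorem \ref{TheoremDisc} to produce the solution $\Phi_\epsilon$ of Problem \ref{ProblemDisc} on $\MR_0\times\MV$ with this boundary data, enjoying the uniform strict lower bound $\omega_0+\sqrt{-1}\ddbar\Phi_\epsilon(\tau,\cdot)>\tfrac{1}{2}\omega_0$. Since the data and the equation are both rotationally symmetric in $\tau$, uniqueness for the non-degenerate Monge-Amp\`ere problem forces $\Phi_\epsilon$ to be rotation invariant. Letting $\epsilon\to 0$ and invoking the uniform $C^{1,1}$ estimates available in this setting, a subsequence $\Phi_{\epsilon_k}$ converges in $C^{1,\alpha}$ to a rotation-invariant weak solution $\Phi$ of Problem \ref{ProblemHomoDisc} on $\MR_0\times\MV$; pulling $\Phi$ back through the covering map and using uniqueness of weak geodesics identifies this pullback as the solution in the theorem.

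The hardest step is transporting the Hessian lower bound through the limit. For each fixed $\tau$, the slice-wise $z$-Hessians of $\Phi_{\epsilon_k}(\tau,\cdot)$ are uniformly bounded in $L^\infty(\MV)$ by the $C^{1,1}$ estimate, so they have a weak-$\ast$ $L^\infty$ limit, which by distributional continuity of $\sqrt{-1}\ddbar$ must coincide with $\sqrt{-1}\ddbar\Phi(\tau,\cdot)$. The pointwise inequality $\omega_0+\sqrt{-1}\ddbar\Phi_{\epsilon_k}(\tau,\cdot)-\tfrac{1}{2}\omega_0>0$, tested against arbitrary nonnegative matrix-valued $L^1$ functions on $\MV$, then descends to the a.e.\ bound $\omega_0+\sqrt{-1}\ddbar\Phi(\tau,\cdot)\geq\tfrac{1}{2}\omega_0$ on $\{\tau\}\times\MV$, which is exactly the theorem's conclusion. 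The main obstacle is this unavoidable degradation of the inequality from strict to non-strict under weak-$\ast$ convergence; this is precisely why the conclusion is stated with $\geq$ rather than $>$.
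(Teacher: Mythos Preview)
Your proposal is correct and follows essentially the same route as the paper: reduce to an annulus via the holomorphic covering, apply Theorem~\ref{TheoremDisc} to the $\epsilon$-perturbed solutions $\Phi_\epsilon$, and pass the slice-wise Hessian lower bound through the limit $\epsilon\to 0$, where the strict inequality necessarily weakens to $\geq$. The only cosmetic differences are that the paper uses the annulus $\{1<|\tau|<e\}$ with covering $\zeta\mapsto e^{\zeta}$ and invokes the monotone increase of $\Phi^\epsilon$ as $\epsilon\downarrow 0$ (hence full-sequence $C^1$ convergence to the weak geodesic), whereas you extract a $C^{1,\alpha}$-convergent subsequence from the uniform $C^{1,1}$ bounds and then appeal to uniqueness of weak geodesics; both mechanisms identify the same limit.
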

\subsection{Notation}\label{sec:notation}
 Roman indices, $i,j,k ,..., $ are from $0$ to $n$, where $n $ is the dimension of $\MV$. Greek indices, $\alpha, \beta, ...$, are from $1$ to $n$, except $\tau$ which is the coordinate on $\MR$; and  we consider the coordinate $\tau$ on the domain $\MR$ as the zeroth index. 

For the background metric $\omega_0$ on the K\"ahler manifold $\MV$, given local coordinates $\{z^\alpha\}$, we denote
\[\omega_0=\sqrt{-1} b_{\abbar}dz^\alpha \wedge \overline{dz ^\beta}.\] For a solution $\Phi$ to Problem \ref{ProblemDisc}, and for each $\tau \in \overline\MR$, denote
\[\omega_0+\sqrt{-1}\ddbar \Phi(\tau, \ast)=\sqrt{-1}g_{\abbar}\dzabbar.\]
We denote the determinant of $g_{\abbar}$ and $b_{\abbar}$ by $g$ and $b$, respectively. Then the equation 
(\ref{eq:wedgedeter}) takes another form
\begin{align}\label{eq:leafdeter}
\Phi_{\tautaubar}-\Phi_{\tau\overline\beta}g^{\abbar}\Phi_{\alpha\overline{\tau}}=\frac{\epsilon b}{g}. 
\end{align}

On $\MR\times \MV$, denote
\[\Omega_0+\sqrt{-1}\ddbar \Phi=\sqrt{-1}h_{\ijbar}dz^i\wedge \overline{dz^j}.\]
As a matrix 
\[(h_{\ijbar})=
\left(\begin{array}{cc}\Phi_{\tautaubar}& \Phi_{\tau\overline{\beta}}\\
											\Phi_{\alpha\overline{\tau}}& g_{\abbar}
       \end{array}
\right).
\]
When $\epsilon>0$, 
$(h_{\ijbar})$ is a positive definite Hermitian matrix, and $\Omega_0+\sqrt{-1}\ddbar\Phi$ defines a K\"ahler metric on the product space $\MR\times \MV$. The Laplace operator on the product space with this metric is 
\[\tilde L=h^{\ijbar}\partial_i\overline{\partial_j}.\]
In this paper, it's more convenient to use a scalar function multiple of this Laplace operator 
\[L=\frac{\epsilon b}{g}\tilde L.\]
We write 
\begin{align}
L=L^{\ijbar}\partial_i\overline{\partial_j},
\end{align}
where
\begin{align}\label{eq:operatorcoef}(L^{\ijbar})=
\left(\begin{array}{cc}1& -g^{\mu\overline{\beta}}\Phi_{\mu\overline\tau}\\
											-g^{\alpha\overline\theta}\Phi_{\tau\overline{\theta}}&g^{\alpha\overline\theta}\Phi_{\tau\overline{\theta}}g^{\mu\overline{\beta}}\Phi_{\mu\overline\tau}+ \frac{\epsilon b}{g}g^{\abbar}
       \end{array}
\right).
\end{align}
Here, $i$ is the column index and $j$ is the row index.
% We can see as $\epsilon\rightarrow 0$ the operator $L$ has uniformly bounded coefficients providing $\omega_0+\sqrt{-1}\ddbar\Phi(\tau, \ast)$ has a uniform lower bound.
It's also convenient to define $p^{\ijbar}$,
\begin{align}\label{pijbardef}(p^{\ijbar})=
\left(\begin{array}{cc}1& -g^{\mu\overline{\beta}}\Phi_{\mu\overline\tau}\\
											-g^{\alpha\overline\theta}\Phi_{\tau\overline{\theta}}&g^{\alpha\overline\theta}\Phi_{\tau\overline{\theta}}g^{\mu\overline{\beta}}\Phi_{\mu\overline\tau}
       \end{array}
\right).
\end{align}

Our curvature notation is 
\[-\nabla_\alpha\nabla_\gammabar\partial_\theta+\nabla_\gammabar\nabla_\alpha\partial_\theta
   ={R_{\alpha\gammabar\theta}}^\mu\partial_ \mu.\]
So, for a function $u$ on $\MV$, 
\begin{align}u_{,\alpha\beta\thetabar}=u_{,\alpha\thetabar\beta}-{R_{\beta\thetabar\alpha}}^\gamma u_\gamma,\label{thirdcommute}
	\end{align}
\begin{align}u_{, \alpha\beta\theta\gammabar}=u_{,\alpha\beta\gammabar\theta}
 																			-{R_{\theta\gammabar\beta}}^\mu u_{,\mu\alpha}
                      													-{R_{\theta\gammabar\alpha}}^\mu u_{,\mu\beta}.	\label{fourthcommute}
                      													\end{align}
\subsection{Structure of the Paper}\label{sec:structure}
 To prove Theorem \ref{TheoremDisc}, we need to control the second order derivatives of $\Phi$. So in section \ref{sec:subharm}, we construct a quantity
\begin{align}\label{eq:Qmention}
Q=\Phi_{\covab}\overline{\Phi_{,\gamma\theta}}g^{\alpha\overline{\theta}}g^{\beta\overline{\gamma}}+
		\Phi_{\abbar}\Phi_{\gamma\overline\theta}g^{\alpha\overline{\theta}}g^{\gamma\overline{\beta}}+
			\Phi_{\alpha}\overline{\Phi_{\beta}}g^{\alpha\overline \beta},
\end{align}
and show that when $\epsilon $ is small enough for a very small $\lambda\in \ER^+$, 
\[LQ>-\lambda Q.\]
The covariant derivatives in (\ref{eq:Qmention}) are taken with respect to the metric $\omega_0$.
To estimate $LQ$, we need to compute and estimate $L(\Phi_{\covab})$,  $L(\Phi_{\abbar})$ and $L(\Phi_{\alpha})$, these are done in section \ref{sec:a},  \ref{sec:b} and \ref{sec:gradient} respectively. How operator $L$ acts on tensors is explained in section \ref{sec:sec}.

In Appendix \ref{sec:app}, we carry out computations in the situation that $(\MV, \omega_0)$ is a 1-dimensional flat torus, in this case we get more complete results. Actually, the main idea of this paper is to generalize Theorem \ref{metriclowerbd} to general K\"ahler manifolds.

In Appendix \ref{sec:Leaf}, we study the limiting case of $\epsilon=0$. In this case, there is a leaf structure associated to a solution to the homogenous complex Monge-Amp\`ere equation, and the computation is simpler.

%%%%Section 1 ^^^ Section 1^^^Section 1 ^^^ Section 1^^^Section 1 ^^^ Section 1^^^Section 1 
%%%%%%%%%%%%%%%%%%%%%%%%%%%%%%%%%%%%%%%%%%%%%%%%%%
%%%%%============================================================================
%%%%%%%%%%%%%%%%%%%%%%%%%%%%%%%%%%%%%%%%%%%%%%%%%
%%%%Section2>>>Section2>>>Section2>>>Section2>>>Section2>>>Section2>>>Section2>>>
\section{Equations for $\Phi_{\alpha\betabar}$\ , $\Phi_{,\alpha\beta}$ and $\Phi_\theta$}\label{sec:sec}
%This section is a preparation for section \ref{sec:subharm}, where we computed $LQ$. 

In this section we compute and estimate $L(\Phi_{\covab})$,  $L(\Phi_{\abbar})$ and $L(\Phi_{\alpha})$. Here we consider $\Phi_{\abbar}$ as a section of the bundle $\pi_{\MV}^\ast(T^{\ast1,0}(\MV)\otimes T^{\ast 0,1}(\MV))$,  consider $\Phi_{\covab}$ as a section of the bundle $\pi_{\MV}^\ast(T^{\ast 1,0}(\MV)\otimes T^{\ast 1,0}(\MV))$ and consider $\Phi_\theta$ as a section of the bundle $\pi_{\MV}^\ast(T^{\ast 1,0}(\MV))$. These bundles have natural metrics induced by the metric $\omega_0$:
\begin{align}
\label{metricG}
<dz^\alpha, \overline{dz^{\beta}}>=&b^{\alpha\betabar},\\
\label{metricB}<dz^{\alpha}\otimes dz^\beta, \overline{dz^\theta\otimes dz^\gamma}>&=b^{\alpha\overline{\theta}}b^{\beta\overline{\gamma}},\\
\label{metricA}<dz^{\alpha}\otimes \overline{dz^\beta}, \overline{dz^\theta}\otimes dz^\gamma>&=b^{\alpha\overline{\theta}}b^{\gamma\overline{\beta}}.
\end{align}
We denote the norm of a tensor  by $|\cdot|$. 
For $\mathcal{F}=\mathcal{F}_{\alpha\betabar}dz^\alpha\otimes\overline{dz^{\beta}}$ and $\mathcal{G}=\mathcal{G}_{\alpha\beta}dz^\alpha\otimes{dz^{\beta}}$, 
\begin{align}
|\mathcal{F}|^2=\mathcal{F}_{\alpha\betabar}\overline{\mathcal{F}_{\theta\gammabar}}b^{\alpha\thetabar}b^{\gamma\betabar},
\label{ANormDef}\\
|\mathcal{G}|^2=\mathcal{G}_{\alpha\beta}\overline{\mathcal{G}_{\theta\gamma}}b^{\alpha\thetabar}b^{\beta\gammabar}.
\label{BNormDef}
\end{align}

The Chern connection on the bundle $T^{\ast 1,0}(\MV)$ naturally induces connections on 
$\pi_{\MV}^\ast(T^{\ast1,0}(\MV)\otimes T^{\ast 0,1}(\MV))$ and $\pi_{\MV}^\ast(T^{\ast 1,0}(V)\otimes T^{\ast 1,0}(\MV))$, and for these connections we have 
\begin{align}\label{connectionsA}
&\nabla_\tau( dz^\alpha\otimes dz^\betabar)=0,\ \ \ \nabla_\theta (dz^\alpha\otimes dz^\betabar)=(\nabla_\theta dz^\alpha)\otimes dz^{\betabar};\\ \label{connectionsB}
&\nabla_\taubar( dz^\alpha\otimes dz^\betabar)=0,\ \ \ \nabla_\thetabar (dz^\alpha\otimes dz^\betabar)= dz^\alpha\otimes \overline{(\nabla_\theta dz^{\beta})};\\ \label{barconnectionsB}
&\nabla_\tau (dz^\alpha\otimes dz^\beta)=0,\ \ \ \nabla_\theta( dz^\alpha\otimes dz^\beta)=(\nabla_\theta dz^\alpha)\otimes dz^{\beta}+ dz^{\alpha}\otimes (\nabla_\theta dz^\beta);\\
&\nabla_\taubar (dz^\alpha\otimes dz^\beta)=0,\ \ \ \nabla_\thetabar( dz^\alpha\otimes dz^\beta)=0.
\end{align}
The $\nabla_\tau, \nabla_\taubar$ derivatives are zero, because the metric $\omega_0$ is independent of the $\tau$ variable. The connections are also independent of $\tau$. As a consequence, $\nabla_\tau, \nabla_\taubar$ commute with all the $\nabla_\alpha, \ \nabla_{\betabar}$, so all components of the curvature tensor, containing $\tau, \ \taubar$, vanishes, i.e.
\begin{align}R_{\tau\alphabar\ast\ast}=R_{\taubar\alpha\ast\ast}=R_{\beta\taubar\ast\ast}=R_{\betabar\tau\ast\ast}=0.
\label{tauCurvature0}
\end{align}

For norms of curvature tensors  $R$, let
\begin{align}
|R|^2=R_{\alpha\etabar\beta\xibar}\overline{R_{\theta\zetabar\gamma\rhobar}}b^{\alpha\thetabar}b^{\beta\gammabar}b^{\zeta\etabar}b^{\rho\xibar}.
\label{CurvatureNormDef}
\end{align}For covariant derivatives of curvatures, the definition is analogous.

When acting on sections of bundles 
\[L=L^{\ijbar}\nabla_{\overline j}\nabla_i.\]

In addition, we denote
\begin{align}
B_{\alpha\beta}=\Phi_{\covab},\ \ \ \ \ \  A_{\alpha\overline\beta}=\Phi_{\abbar}.
\end{align}

%%%%%%%%%%%%%%%%%%%%%%%%%%%%%%%%%%%%%%%%%%%%%%%%
%%%%%%%%%%Section for LA>>>>>>%%%Section for LA>>>>>>%%%Section for LA>>>>>>%%%Section for LA>>>>>>%%%Section for LA>>>>>>%%%Section for LA>>>>>>%%%Section for LA>>>>>>%%%Section for LA>>>>>>%%%Section for LA>>>>>>%%%Section for LA>>>>>>%%%Section for LA>>>>>>%%%Section for LA>>>>>>
\subsection{Equations for $\Phi_{\abbar}$}\label{sec:a}
We apply $\partial_\theta$ to (\ref{eq:leafdeter}), and get
\begin{align}\label{equationthetader}
\Phi_{\tau\taubar\theta}-\Phi_{\tau\betabar\theta}g^{\alpha\betabar}\Phi_{\alpha\taubar}
											-\Phi_{\tau\overline{\beta}}g^{\alpha\betabar}\Phi_{,\alpha\taubar\theta}
								+\Phi_{\tau\betabar}g^{\alpha\mubar}\Phi_{,\zeta\mubar\theta}g^{\zeta\betabar}\Phi_{\alpha\taubar}=-\epsilon \frac{b}{g}g^{\alpha\betabar}\Phi_{,\alpha\betabar\theta}.
\end{align}
Here the covariant derivatives are taken with respect to the Chern connection of the metric (\ref{metricB}). 
Then we  apply $\partial_\gammabar$ to (\ref{equationthetader}) and get
\begin{align}
&\Phi_{\tau\taubar\theta\gammabar}
-\Phi_{,\tau\betabar\theta\gammabar}g^{\alpha\betabar}\Phi_{\alpha\taubar}-\Phi_{\tau\betabar\theta}g^{\alpha\betabar}\Phi_{\alpha\taubar\gammabar}+\Phi_{\tau\betabar\theta}g^{\alpha\mubar}\Phi_{,\phi\mubar\gammabar} g^{\phi\betabar}\Phi_{\alpha\taubar}\\
&-\Phi_{,\tau\betabar\gammabar}g^{\alpha\betabar}\Phi_{,\alpha\taubar\theta}-\Phi_{\tau\betabar}g^{\alpha\betabar}\Phi_{,\alpha\taubar\theta\gammabar}+\Phi_{\tau\betabar}g^{\alpha\mubar}\Phi_{,\phi\mubar\gammabar} g^{\phi\betabar}\Phi_{,\alpha\taubar\theta}\\
&+\Phi_{,\tau\betabar\gammabar}g^{\alpha\mubar}\Phi_{,\zeta\mubar\theta}g^{\zeta\betabar}\Phi_{\alpha\taubar}-\Phi_{\tau\betabar}g^{\alpha\etabar}\Phi_{,\lambda\etabar\gammabar}g^{\lambda\mubar}\Phi_{,\zeta\mubar\theta}g^{\zeta\betabar}\Phi_{\alpha\taubar}+\Phi_{\tau\betabar}g^{\alpha\mubar}\Phi_{,\zeta\mubar\theta\gammabar}g^{\zeta\betabar}\Phi_{\alpha\taubar}\\
&-\Phi_{\tau\betabar}g^{\alpha\mubar}\Phi_{,\zeta\mubar\theta}g^{\zeta\psibar}\Phi_{,\eta\psibar\gammabar}g^{\eta\betabar}\Phi_{\alpha\taubar}+\Phi_{\tau\betabar}g^{\alpha\mubar}\Phi_{,\zeta\mubar\theta}g^{\zeta\betabar}\Phi_{\alpha\taubar\gammabar}\\
=&- \frac{\epsilon b}{g}
\left(
g^{\alpha\betabar}\Phi_{,\alpha\betabar\theta\gammabar}-g^{\zeta\etabar}\Phi_{,\zeta\etabar\gammabar}g^{\alpha\betabar}\Phi_{,\alpha\betabar\theta}-g^{\alpha\mubar}\Phi_{,\zeta\mubar\gammabar}g^{\zeta\betabar}\Phi_{,\alpha\betabar\theta}
\right).
\end{align} 
Then we commute indices. In all terms containing $\theta, \gammabar$, we properly move $\theta, \gammabar$ leftwards and get 
\begin{align}
	&\Phi_{\theta\gammabar\tau\taubar}
	-\Phi_{,\theta\gammabar\tau\betabar}g^{\alpha\betabar}\Phi_{\alpha\taubar}
	-\Phi_{\theta\betabar\tau}g^{\alpha\betabar}\Phi_{\alpha\gammabar\taubar}
	+\Phi_{\theta\betabar\tau}g^{\phi\betabar}\Phi_{,\phi\gammabar\mubar}g^{\alpha\mubar} \Phi_{\alpha\taubar}\label{0622-19}\\
	&-\Phi_{,\betabar\gammabar\tau}g^{\alpha\betabar}\Phi_{,\alpha\theta\taubar}
	-\Phi_{,\theta\gammabar\alpha\taubar}\Phi_{\tau\betabar}g^{\alpha\betabar}+{R_{\alpha\gammabar\theta}}^{\eta}\Phi_{\eta\taubar}g^{\alpha\betabar}\Phi_{\tau\betabar}\label{0622-20}\\
	&+\Phi_{,\alpha\theta\taubar}g^{\alpha\mubar}\Phi_{,\mubar\gammabar\phi}g^{\phi\betabar}\Phi_{\tau\betabar}
	-\Phi_{,\alpha\theta\taubar}g^{\alpha\mubar}{R_{,\phi\gammabar\mubar}}^{\etabar}\Phi_{\etabar} g^{\phi\betabar}\Phi_{\tau\betabar}\label{0622-21}\\
	&+\Phi_{,\betabar\gammabar\tau}g^{\zeta\betabar}\Phi_{,\zeta\theta\mubar}g^{\alpha\mubar}\Phi_{\alpha\taubar}+\Phi_{,\betabar\gammabar\tau}g^{\zeta\betabar}{R_{\zeta\mubar\theta}}^\eta\Phi_\eta g^{\alpha\mubar}\Phi_{\alpha\taubar}\label{0622-22}\\
	&-\Phi_{,\lambda\gammabar\etabar}g^{\lambda\mubar}\Phi_{,\theta\mubar\zeta}g^{\zeta\betabar}\Phi_{\tau\betabar}g^{\alpha\etabar}\Phi_{\alpha\taubar}\label{0622-23}\\
	&+\Phi_{\tau\betabar}g^{\alpha\mubar}\Phi_{,\theta\gammabar\zeta\mubar}g^{\zeta\betabar}\Phi_{\alpha\taubar}+\left(({R_{\theta\mubar\zeta}}^\eta\Phi_\eta)_{,\gammabar}+({R_{\gammabar\zeta\theta}}^\eta\Phi_\eta)_{,\mubar}\right)\Phi_{\tau\betabar}g^{\alpha\mubar}g^{\zeta\betabar}\Phi_{\alpha\taubar}\label{0622-24}\\
	&-\Phi_{\tau\betabar}g^{\alpha\mubar}\Phi_{,\zeta\theta\mubar}g^{\zeta\psibar}\Phi_{,\gammabar\psibar\eta}g^{\eta\betabar}\Phi_{\alpha\taubar}\label{0622-25}
	\\&
	-\Phi_{\tau\betabar}g^{\alpha\mubar}\left({R_{\theta\mubar\zeta}}^\psi\Phi_{\psi}\Phi_{,\gammabar\phibar\eta}+{\Phi_{,\zeta\theta\mubar}}{R_{\phibar\eta\gammabar}}^{\rhobar}\Phi_{\rhobar}
+{R_{\theta\mubar\zeta}}^\psi\Phi_{\psi}{R_{\phibar\eta\gammabar}}^{\rhobar}\Phi_{\rhobar}\right)g^{\eta\betabar}\Phi_{\alpha\taubar}g^{\zeta\phibar}\label{0622-26}
	\\
	&+\Phi_{\tau\betabar}g^{\alpha\mubar}\Phi_{,\theta\mubar\zeta}g^{\zeta\betabar}\Phi_{\alpha\gammabar\taubar}\label{0622-27}\\
	=&- \frac{\epsilon b}{g}
	\left(
	g^{\alpha\betabar}\Phi_{,\theta\gammabar\alpha\betabar}\right)- \frac{\epsilon b}{g}g^{\alpha\betabar}
	\left(({R_{\theta\betabar\alpha}}^\mu\Phi_{\mu})_{,\gammabar}+({R_{\gammabar\alpha\theta}}^\rho\Phi_\rho)_{,\betabar}\right)\label{0622-28}\\
	&+\frac{\epsilon b}{g}\left(g^{\zeta\etabar}\Phi_{,\zeta\etabar\gammabar}g^{\alpha\betabar}\Phi_{,\alpha\betabar\theta}\right)+
	\frac{\epsilon b}{g}\left(g^{\alpha\mubar}\Phi_{,\zeta\gammabar\mubar}g^{\zeta\betabar}\Phi_{,\theta\betabar\alpha}\right)\label{0622-29}\\
	&+\frac{\epsilon b}{g}\left(g^{\alpha\mubar}\Phi_{,\mubar\gammabar\zeta}g^{\zeta\betabar}\Phi_{,\theta\alpha\betabar}\right)-\frac{\epsilon b}{g}\left(g^{\alpha\mubar}\Phi_{,\mubar\gammabar\zeta}g^{\zeta\betabar}\Phi_{,\theta\alpha\betabar}\right).\label{0622-30}
\end{align} 
When commuting  indices, we used (\ref{thirdcommute})  (\ref{fourthcommute}) and (\ref{tauCurvature0}). The terms in (\ref{0622-30}) are ficticious, they add up to be zero.

We will simplify the equation above to (\ref{equationAfirst}). To do this, we need the following notation: $(\ast. \ast)_k$ stands for the $k-$th term in $(\ast.\ast)$, including the sign. For example
 \begin{align}\label{termnotation}
 	(\ref{0622-28})_1=- \frac{\epsilon b}{g}
 \left(
 g^{\alpha\betabar}\Phi_{,\theta\gammabar\alpha\betabar}\right),\ \ \  (\ref{0622-19})_2=	-\Phi_{,\theta\gammabar\tau\betabar}g^{\alpha\betabar}\Phi_{\alpha\taubar}. \end{align}
   With this notation
 \begin{align}
 (\ref{0622-19})_1+ (\ref{0622-19})_2+(\ref{0622-20})_2+(\ref{0622-24})_1- (\ref{0622-28})_1&=L^{i\jbar}A_{\theta\gammabar,i\jbar};\\
- (\ref{0622-19})_3- (\ref{0622-19})_4- (\ref{0622-23})- (\ref{0622-27})+ (\ref{0622-29})_2
 &=L^{i\jbar}A_{\theta\mubar, i}g^{\zeta\mubar}A_{\zeta\gammabar,\jbar};
\\ -(\ref{0622-20})_1- (\ref{0622-21})_1- (\ref{0622-22})_1- (\ref{0622-25})+ (\ref{0622-30})_1
 &=L^{i\jbar}B_{\theta\mu,\jbar}g^{\mu\zetabar}\overline{B_{\zeta\gamma,\ibar}}.
 \end{align}
For all terms containing curvature $R$, we combine them into $U_{\theta\gammabar}$, and denote
\begin{align}
	 (\ref{0622-29})_1+ (\ref{0622-30})_2=F_{\theta\gammabar.}
\end{align}We get 
\begin{align}
\label{equationAfirst}
L^{\ijbar}A_{\theta\gammabar, \ijbar}=A_{\theta\mubar, i} g^{\zeta\mubar} A_{\zeta\gammabar, \jbar}L^{\ijbar}
+B_{\theta\mu,\jbar}g^{\mu\zetabar}\overline{B_{\zeta\gamma}}_{,i} L^{\ijbar}+F_{\theta\gammabar}+U_{\theta\gammabar}.
\end{align}
The expressions for $F, U$ are
\begin{align}\label{expressF}
F_{\theta\gammabar}=\frac{\epsilon b}{g}
\left(g^{\zeta\etabar}\Phi_{,\zeta\etabar\theta} g^{\alpha\betabar}\Phi_{,\alpha\betabar\gammabar}
						-\Phi_{,\theta\alpha\betabar}g^{\alpha\mubar}\Phi_{,\mubar\gammabar\zeta}g^{\zeta\betabar}\right),
\end{align}
and
\begin{align}
U_{\theta\gammabar}=&-{R_{\theta\gammabar\alpha}}^\eta\Phi_{\eta\taubar}g^{\alpha\betabar}\Phi_{\tau\betabar}
-({R_{\theta\mubar\zeta}}^\eta\Phi_\eta)_{,\gammabar} g^{\zeta\betabar}\Phi_{\tau\betabar}g^{\alpha\mubar}\Phi_{\alpha\taubar}
-({R_{\theta\gammabar\zeta}}^\eta\Phi_\eta)_{,\mubar} g^{\zeta\betabar}\Phi_{\tau\betabar}g^{\alpha\mubar}\Phi_{\alpha\taubar}\\
&-({R_{\theta\mubar\zeta}}^\lambda\Phi_\lambda )g^{\zeta\etabar}({R_{\phi\gammabar\etabar}}^\psibar \Phi_\psibar )g^{\phi\betabar}\Phi_{\tau\betabar}g^{\alpha\mubar}\Phi_{\alpha\taubar}\\
&+\frac{\epsilon b}{g}\left(({R_{\alpha\gammabar\theta}}^\eta\Phi_\eta)_{,\betabar}g^{\alpha\betabar}
   													-({R_{\alpha\betabar\theta}}^\eta\Phi_\eta)_{,\gammabar}g^{\alpha\betabar}\right)\\
&-({R_{\gammabar\mu\phibar}}^{\zetabar}\Phi_{\zetabar})g^{\mu\betabar}g^{\alpha\phibar}
\Phi_{,\theta\alpha\taubar}\Phi_{\tau\betabar}\label{A6_2}\\
&-({R_{\phi\gammabar\etabar}}^\psibar \Phi_{\psibar})g^{\zeta\etabar}\Phi_{,\theta\zeta\mubar}g^{\alpha\mubar}
    \Phi_{\alpha\taubar}g^{\phi\betabar}\Phi_{\tau\betabar}\label{A11_3}\\
&-({R_{\theta\mubar\zeta}}^\eta\Phi_\eta) g^{\zeta\betabar}\Phi_{,\betabar\gammabar\tau}g^{\alpha\mubar}
   \Phi_{\alpha\taubar}\label{A8_2}\\
&+({R_{\theta\mubar\zeta}}^\lambda \Phi_\lambda)g^{\zeta\etabar}\Phi_{,\etabar\gammabar\phi}
     g^{\alpha\mubar}\Phi_{\alpha\taubar}g^{\phi\betabar}\Phi_{\tau\betabar}.\label{A11_2}
\end{align}
\begin{remark}
$F$ vanishes when $\dim (\MV)=1$, and $U$ vanishes when the metric $\omega_0$ is flat (curvature $=0$), so when the K\"ahler manifold  $(\MV, \omega_0)$ is a  1-dimensional flat torus we have more complete results. This is discussed in Appendix \ref{sec:app}. 
\end{remark}
In section \ref{sec:subharm}, we will plug (\ref{equationAfirst}) into the expression of $LQ$ and try to get 
\[LQ>-\lambda Q.\]
So we need to use non-negative terms to control indefinite terms. The main non-negative terms from $LQ$ will be
\begin{align}\label{expressE}
E=B_{\theta\gamma,\alphabar} \overline{B_{\zeta\eta}}_{,\beta} g^{\beta\alphabar} g^{\theta\etabar} g^{\gamma\zetabar}
+A_{\theta\gammabar, \alpha}A_{\eta\zetabar,\betabar}g^{\alpha\betabar}g^{\eta\gammabar}g^{\theta\zetabar},
\end{align}
\begin{align}\label{expressP}
P=B_{\theta\gamma,\jbar} \overline{B_{\zeta\eta}}_{,i} p^{\ijbar} g^{\theta\etabar} g^{\gamma\zetabar}
+A_{\theta\gammabar, i}A_{\eta\zetabar,\jbar}p^{\ijbar}g^{\eta\gammabar}g^{\theta\zetabar},
\end{align}
and
\begin{align}\label{expressT}
T=\Phi_{\tau\betabar}\Phi_{\alpha\taubar}g^{\alpha\betabar}+\Phi_{\tau\alpha}\Phi_{\taubar\betabar}g^{\alpha\betabar}.
\end{align}
The $p^{\ijbar}$ in (\ref{expressP}) is defined by (\ref{pijbardef}).

In the previous expression of $U$, we need to combine (\ref{A6_2}) with (\ref{A11_3}) and combine (\ref{A8_2}) with (\ref{A11_2}), then we can control $U$ by $E, T, P$ and $Q$. By changing some dummy indices we get
\begin{align}(\ref{A6_2})+(\ref{A11_3})=(\Phi_{\tau\betabar}g^{\mu\betabar})({R_{\mu\gammabar\phibar}}^\xibar\Phi_{\xibar})g^{\alpha\phibar}(\Phi_{,\theta\alpha\taubar}-\Phi_{,\theta\alpha\etabar}g^{\lambda\etabar}\Phi_{\lambda\taubar}),\end{align}
\begin{align} (\ref{A8_2})+(\ref{A11_2})=-(\Phi_{\alpha\taubar}g^{\alpha\mubar})({R_{\theta\mubar\zeta}}^{\lambda}\Phi_{\lambda})g^{\zeta\betabar}(\Phi_{,\betabar\gammabar\tau}-\Phi_{\betabar\gammabar\phi}g^{\phi\rhobar}\Phi_{\tau\rhobar}).
	\end{align}
Then providing that $Q<1$ and
\begin{align}
{\frac{1}{2}}(b_{\alpha\betabar})<(g_{\alpha\betabar})<{\frac{3}{2}}(b_{\alpha\betabar} ) 																																						\label{metricassumption}
\end{align}we can find a constant $\ConstA$ which only depends on the dimension of $\MV$, so that
 \begin{align}\label{controlF}
|F|\leq \ConstA\frac{\epsilon b}{g}E,
\end{align}
 \begin{align}\label{controlU}
|U|\leq \ConstA (\max|R|+\max|R|^2+\max|\nabla R|)
 			\left(T+\sqrt{Q}\sqrt{T}\sqrt{P}+\frac{\epsilon b}{g}\sqrt{Q}\right).
\end{align}

\subsection{Equations for $\Phi_{\covab}$}\label{sec:b}
The computation for $B=\Phi_{\covab}dz^{\alpha}\otimes dz^{\beta}$ is similar to the previous computation for $A$. Apply $\partial_\theta$ and $\nabla_{\gamma}$ to equation (\ref{eq:leafdeter}) gives that
\begin{align}\label{equationB}
L^{\ijbar}B_{\theta\gamma,\ijbar}=B_{\theta\mu,\jbar}g^{\mu\zetabar}A_{\gamma\zetabar,i}L^{\ijbar}
   										+A_{\theta\mubar, i} g^{\nu\mubar} B_{\nu\gamma, \jbar} L^{\ijbar}
												+H_{\theta\gamma}+V_{\theta\gamma},
\end{align}
where 
\begin{align}\label{expressH}
H_{\theta\gamma}=\frac{\epsilon b}{g}\left(A_{\alpha\betabar,\theta}A_{\mu\zetabar,\gamma}g^{\alpha\betabar}g^{\mu\zetabar}-A_{\gamma\betabar, \alpha} A_{\theta\mubar,\zeta}g^{\zeta\betabar}g^{\alpha\mubar}\right),
\end{align}
and 
\begin{align}\label{expressV}
V_{\theta\gamma}=&-{R_{\gamma\mubar\phi}}^\zeta\Phi_\zeta g^{\phi\betabar} 
		\left(A_{\theta\betabar,\tau}-A_{\theta\betabar,\eta}g^{\eta\xibar}\Phi_{\tau\xibar}\right)
              g^{\alpha\mubar}\Phi_{\alpha\taubar}\\
& -{R_{\theta\mubar\phi}}^\zeta\Phi_\zeta g^{\phi\betabar} 
		\left(A_{\gamma\betabar,\tau}-A_{\gamma\betabar,\eta}g^{\eta\xibar}\Phi_{\tau\xibar}\right)
              g^{\alpha\mubar}\Phi_{\alpha\taubar}\\
&+\frac{\epsilon b}{g}\Phi_\lambda \left({R_{\theta\mubar\zeta}}^\lambda\Phi_{\gamma\betabar\alpha}+{R_{\gamma\mubar\zeta}}^\lambda\Phi_{\theta\betabar\alpha}\right)g^{\alpha\mubar}g^{\zeta\betabar}+{R_{\betabar\gamma\theta}}^\mu\Phi_{\mu\tau}g^{\alpha\betabar}\Phi_{\alpha\taubar}\\
&-g^{\eta\betabar}\Phi_{\tau\betabar}g^{\alpha\mubar}\Phi_{\alpha\taubar}
\left({R_{\theta\mubar\eta}}^\zeta\Phi_{\zeta\gamma}+{R_{\gamma\mubar\eta}}^\zeta\Phi_{\zeta\theta}\right)
		-{R_{\gamma\mubar\theta}}^\zeta\Phi_{\zeta\eta}
		 g^{\eta\betabar}\Phi_{\tau\betabar}g^{\alpha\mubar}\Phi_{\alpha\taubar}\\
&-\left(\nabla_{\gamma}{R_{\theta\mubar\eta}}^\zeta\right)\Phi_{\zeta}
			g^{\eta\betabar}\Phi_{\tau\betabar}g^{\alpha\mubar}\Phi_{\alpha\taubar}
	+\frac{\epsilon b}{g}g^{\alpha\betabar}\left(\nabla_{\gamma}{R_{\betabar\theta\alpha}}^\mu\right)\Phi_\mu
			\label{temp2302022603}\\
&+\frac{\epsilon b}{g}g^{\alpha\betabar}\left({R_{\betabar\gamma\theta}}^\mu\Phi_{\mu\alpha}+
      {R_{\betabar\gamma\alpha}}^{\mu}\Phi_{\mu\theta}
		+{R_{\betabar\theta\alpha}}^{\mu}\Phi_{\mu\gamma}\right)
\end{align}
\begin{remark}
$V$ and $H$ are both symmetric tensors ( (\ref{temp2302022603}) is symmetric in $\theta$ and $\gamma$ because of the second Bianchi identity). $H$ vanishes when $\dim(\MV)=1$, $V$ vanishes when $\omega_0$  is flat, analogous to $U$ and $F$.
\end{remark}

We have the following estimates for $H$ and $V$. There is a constant $\ConstB$, which only depends on the dimension of the K\"ahler manifold $\MV$, so that, when $Q<1$ and (\ref{metricassumption}) is satisfied,
\begin{align}\label{controlH}
|H|\leq \ConstB\frac{\epsilon b}{g}E
\end{align}
\begin{align}\label{controlV}
|V|\leq \ConstB(\max|R|+\max|\nabla R|) \left(T+\sqrt{QTP}+\frac{\epsilon b}{g}(\sqrt{Q}+\sqrt{QE})\right).
\end{align}

We also need to compute $L\overline{B}$. By simply commuting indices, we have
\begin{align}&L^{\ijbar}\overline{B_{\theta\gamma}}_{,\ijbar}
=\overline{L^{j \ibar}\Phi_{\theta\gamma, \ibar j}}=
\overline{
L^{j \ibar}\Phi_{\theta\gamma,  j\ibar}+L^{j \ibar}\left({R_{j\ibar\theta}}^\mu\Phi_{,\mu\gamma}
																				+{R_{j\ibar\gamma}}^\mu\Phi_{,\mu\theta}\right).
}
\end{align}
 $L^{j \ibar}\left({R_{j\ibar\theta}}^\mu\Phi_{,\mu\gamma}
																				+{R_{j\ibar\gamma}}^\mu\Phi_{,\mu\gamma}\right)$ equals
$L^{\beta \alphabar}\left({R_{\beta\alphabar\theta}}^\mu\Phi_{,\mu\gamma}
																				+{R_{\beta\alphabar\gamma}}^\mu\Phi_{,\mu\gamma}\right)$ because of (\ref{tauCurvature0}).
We denote
\begin{align}\label{definitionW}
W_{\thetabar\gammabar}=\overline{V_{\theta\gamma}+L^{\beta \alphabar}\left({R_{\beta\alphabar\theta}}^\mu\Phi_{,\mu\gamma}
																				+{R_{\beta\alphabar\gamma}}^\mu\Phi_{,\mu\theta}\right)}.
\end{align}
Then 
\begin{align}
L^{\ijbar}\overline{B_{\theta\gamma}}_{\ijbar}
=L^{\ijbar}\overline{B_{\gamma\rho}}_{,i}g^{\eta\rhobar}A_{\eta\thetabar, \jbar}
+L^{\ijbar}\overline{B_{\theta\rho}}_{,i}g^{\eta\rhobar}A_{\eta\gammabar, \jbar}
+\overline{H_{\theta\gamma}}
+W_{\thetabar\gammabar}
\end{align}

The estimate of $W$ depends on (\ref{controlV}) and an estimate of 
$L^{\beta \alphabar}\left({R_{\beta\alphabar\theta}}^\mu\Phi_{,\mu\gamma}
	+{R_{\beta\alphabar\gamma}}^\mu\Phi_{,\mu\theta}\right)$
						 in the following.
																				 By (\ref{eq:operatorcoef}), we have
		\begin{align}(L^{\alpha\betabar})=g^{\alpha\overline\theta}\Phi_{\tau\overline{\theta}}g^{\mu\overline{\beta}}\Phi_{\mu\overline\tau}+ \frac{\epsilon b}{g}g^{\abbar}.\end{align}
So for a constant $\ConstBbar$, only depending on the dimension of $\MV$, when  (\ref{metricassumption}) is satisfied, we have
\begin{align}
|L^{\beta \alphabar}\left({R_{\beta\alphabar\theta}}^\mu\Phi_{,\mu\gamma}
													+{R_{\beta\alphabar\gamma}}^\mu\Phi_{,\mu\theta}\right)|
\leq  \ConstBbar \cdot\left(T+\frac{\epsilon b}{g}\right) \cdot\max|R|\cdot \sqrt{Q}
\end{align}

%===========================================================================
\subsection{Equations for $\Phi_{\theta}$}\label{sec:gradient}
By commuting indices in (\ref{equationthetader}), we get
\begin{align}\label{expressS}
L^{\ijbar}\Phi_{\theta, \ijbar}={R_{\zetabar\theta\mu}}^\psi\Phi_{\psi}g^{\mu\betabar}\Phi_{\tau\betabar}g^{\alpha\zetabar}\Phi_{\alpha\taubar}+\frac{\epsilon b}{g}{R_{\betabar\theta\alpha}}^\mu\Phi_{\mu} g^{\alpha\betabar}.
\end{align}
We denote this  by
\[L^{\ijbar}\Phi_{\theta, \ijbar}=S_\theta, \]
and for some constant $\ConstG$, only depending on dimension, 
\begin{align}
|S|\leq \ConstG\cdot (\max|R|)\cdot ( T+ \frac{\epsilon b}{g}) \sqrt{Q},
\end{align}
providing
 (\ref{metricassumption}) is satisfied.
For $\Phi_{\thetabar}$, we have
\begin{align}
L^{\ijbar}\Phi_{\thetabar, \ijbar}&=0
\end{align}
\subsection{Summary}\label{sec:sumcontrol}
To sum up,  we get
\begin{align}
\label{equationA}
L^{\ijbar}A_{\theta\gammabar, \ijbar}=A_{\theta\mubar, i} g^{\zeta\mubar} A_{\zeta\gammabar, \jbar}L^{\ijbar}
+B_{\theta\mu,\jbar}g^{\mu\zetabar}\overline{B_{\zeta\gamma}}_{,i} L^{\ijbar}+F_{\theta\gammabar}+U_{\theta\gammabar},
\end{align}
\begin{align}\label{equationBsum}
L^{\ijbar}B_{\theta\gamma,\ijbar}=B_{\theta\mu,\jbar}g^{\mu\zetabar}A_{\gamma\zetabar,i}L^{\ijbar}
   										+A_{\theta\mubar, i} g^{\nu\mubar} B_{\nu\gamma, \jbar} L^{\ijbar}
												+H_{\theta\gamma}+V_{\theta\gamma},
\end{align}
and 
\begin{align}\label{equationBbarsum}
L^{\ijbar}\overline{B_{\theta\gamma}}_{,\ijbar}=\overline{B_{\theta\mu}}_{,i}g^{\alpha\mubar}A_{\alpha\gammabar,\jbar}L^{\ijbar}+\overline{B_{\gamma\mu}}_{,i}g^{\alpha\mubar}A_{\alpha\thetabar,\jbar}L^{\ijbar}+\overline{H_{\theta\gamma}}+W_{\thetabar\gammabar}.
\end{align}
Providing $Q<1$ and 
\begin{align}
{\frac{1}{2}}(b_{\alpha\betabar})<(g_{\alpha\betabar})<{\frac{3}{2}}(b_{\alpha\betabar} ) 		,		
\end{align}
  there is a constant $C_n$ only depending on $\dim(\MV)$, so that
\begin{align}
|F|+|H|+ |U|+&|V|+|W|+|S|
\leq C_n\cdot (\max|R|+\max|\nabla R|)\frac{\epsilon b}{g} (\sqrt{Q}+\sqrt{QE})\\
&+C_n(\max|R|+\max|R|^2+\max|\nabla R|)(T+\sqrt{QTP})+C_n \frac{\epsilon b}{g} E.
\end{align}
We denote $\CR=C_n\cdot (1+\max|R|+\max|R|^2+\max|\nabla R|)$, then
\begin{align}\label{sumcontrol}
|F|+|H|+ |U|+|V|+|W|+|S|
\leq \CR\left( T+\sqrt{QTP}+\frac{\epsilon b}{g} (\sqrt{Q}+\sqrt{QE}+E)\right).
\end{align}
%%%%%%%%%%%%%%%%%%%%%%%%
%%%%%%%%%%%%%%%%%%%%%%%%%
%%%%%<<<<<<<<section2<<<<<<<section2<<<<section2<<<<section2<<<<section2<<<<section2<
%%%%%<<section2<<<<section2<<<<section2<<<<section2<<<<section2<<<<section2<<<<section2
%%%%+++++++++++++++++++++++++++++++++++++++++++++++++++++++++++++++
%%%%%%>>>>section3>>>section3>>>section3>>>section3>>>section3>>>section3
%%%%%%>>>section3>>>section3>>>section3>>>section3>>>section3
\section{Estimates of Second Order Derivatives of $\Phi$}\label{sec:subharm}
In this section, we estimate the second order derivatives of $\Phi$ by estimating the quantity
\begin{align}\label{eq:Qdef}
Q=\Phi_{\covab}\overline{\Phi_{,\gamma\theta}}g^{\alpha\overline{\theta}}g^{\beta\overline{\gamma}}+
		\Phi_{\abbar}\Phi_{\gamma\overline\theta}g^{\alpha\overline{\theta}}g^{\gamma\overline{\beta}}+
			\Phi_{\alpha}\overline{\Phi_{\beta}}g^{\alpha\overline \beta},
\end{align}
here the covariant derivatives $\Phi_{,\alpha\beta}$ are with respect to the Levi-Civita connection of $\omega_0$. 
First, with computations and the estimate (\ref{sumcontrol}), we get that when some assumptions are satisfied,  
\begin{align}
\label{temp3220220603}
LQ>-\lambda Q,
\end{align}
for a small constant $\lambda$.
This is the main content of section \ref{sec:LQcompute}. Then with (\ref{temp3220220603}), we prove
 Proposition \ref{Proposition1} in section \ref{sec:Qestimate}, which is an apriori estimate showing the value of $Q$ in $\MR\times \MV$ can be controlled by the value of $Q$ on $\partial\MR\times \MV$, which only depends on the boundary value $F$. Then with a continuity argument we prove Theorem \ref{TheoremDisc}.
\subsection{Computation and Estimates for $LQ$}\label{sec:LQcompute}
We denote
\begin{align}
Q_B&=\Phi_{\covab}\overline{\Phi_{,\gamma\theta}}g^{\alpha\overline{\theta}}g^{\beta\overline{\gamma}},
\label{expressQB}\\
Q_A&=\Phi_{\abbar}\Phi_{\gamma\overline\theta}g^{\alpha\overline{\theta}}g^{\gamma\overline{\beta}},
\label{expressQA}\\
		Q_G&=\Phi_{\alpha}\overline{\Phi_{\beta}}g^{\alpha\overline \beta},\label{expressQG}
\end{align}
and do the computation separately.

First, for $Q_B$, we have
\begin{align}
&L Q_B=B_{\alpha\beta, \jbar}\overline{B_{\theta\mu}}_{,i}g^{\alpha\thetabar}g^{\beta\mubar}L^{\ijbar}\\
     +&\left(B_{\alpha\beta, i}-B_{\alpha\eta}g^{\eta\xibar}A_{\beta\xibar,i}-B_{\beta\eta}g^{\eta\xibar}A_{\alpha\xibar,i}\right)\left(\overline{B_{\theta\mu,j}}-A_{\phi\mubar, \jbar}g^{\phi\zetabar}\overline{B_{\theta\zeta}}-A_{\phi\thetabar, \jbar}g^{\phi\zetabar}\overline{B_{\mu\zeta}}\right)g^{\alpha\thetabar}g^{\beta\mubar}L^{\ijbar}\\
&-B_{\alpha\beta}g^{\alpha\zetabar}\overline{B_{\zeta\lambda}}_{,i}g^{\xi\lambdabar}B_{\xi\phi,\jbar}g^{\phi\thetabar}\overline{B_{\theta\mu}}g^{\beta\mubar}L^{\ijbar}-B_{\alpha\beta}g^{\alpha\thetabar}\overline{B_{\theta\mu}}g^{\phi\mubar}B_{\xi\phi,\jbar}g^{\xi\lambdabar}\overline{B_{\lambda\zeta}}_{,i}g^{\beta\zetabar}L^{\ijbar}\\
&+B_{\alpha\beta} g^{\alpha\thetabar }g^{\beta\mubar} \overline{H_{\theta\mu}}
  +B_{\alpha\beta} g^{\alpha\thetabar }g^{\beta\mubar} W_{\thetabar\mubar}
 +H_{\alpha\beta}g^{\alpha\thetabar }g^{\beta\mubar}\   \overline{B_{\theta\mu}}
 +V_{\alpha\beta}g^{\alpha\thetabar }g^{\beta\mubar} \  \overline{B_{\theta\mu}}\\
&-B_{\alpha\beta}g^{\alpha\zetabar}F_{\phi\zetabar}g^{\phi\thetabar}g^{\beta\mubar}\overline{B_{\theta\mu}}
  -B_{\alpha\beta}g^{\alpha\zetabar}U_{\phi\zetabar}g^{\phi\thetabar}g^{\beta\mubar}\overline{B_{\theta\mu}}
\\
  &-B_{\alpha\beta}g^{\beta\zetabar}F_{\phi\zetabar}g^{\phi\mubar}g^{\alpha\thetabar}\overline{B_{\theta\mu}}
  -B_{\alpha\beta}g^{\beta\zetabar}U_{\phi\zetabar}g^{\phi\mubar}g^{\alpha\thetabar}\overline{B_{\theta\mu}}.
\end{align}
The computation is straightforward, we differentiate (\ref{expressQB}) with Leibniz rule and plug in (\ref{equationA}) (\ref{equationBsum}) (\ref{equationBbarsum}), then we note that there are twelve terms, which do not contain $U, V, W, F, H$, cancel with each other. A simplified computation, when $U, V, W, F, H$ all vanish, is in Appendix \ref{sec:Leaf}.
 
 There is a constant $\TCB$, only depending on the dimension, so that
\begin{align}\label{estimateLQB}
LQ_B\geq 
(1-\TCB Q) B_{\alpha\beta, \jbar}\overline{B_{\theta\mu}}_{,i}g^{\alpha\thetabar}g^{\beta\mubar}L^{\ijbar}-\TCB(|W|+|H|+|V|+|F|+|U|)\sqrt{Q},
\end{align}
providing that $Q<1$.

For $Q_A$, by differentiating the expression (\ref{expressQA}) and plugging in (\ref{equationA}), we get
\begin{align}
LQ_A&=2L^{\ijbar}
\left(\Phi_{\alpha\betabar,i}\Phi_{\theta\gammabar}g^{\alpha\gammabar}g^{\theta\betabar}
    		-\Phi_{\alpha\betabar}\Phi_{\theta\gammabar}g^{\alpha\mubar}\Phi_{\nu\mubar,i}
					g^{\nu\gammabar}g^{\theta\betabar}\right)_{\jbar}\\
&=2L^{\ijbar}\left(\Phi_{\alpha\betabar,i}\Phi_{\theta\gammabar,\jbar}g^{\alpha\gammabar}g^{\theta\betabar}
-\Phi_{\alpha\betabar,i}\Phi_{\theta\gammabar}g^{\alpha\etabar}\Phi_{\rho\etabar,\jbar}g^{\rho\gammabar}g^{\theta\betabar}
-\Phi_{\alpha\betabar,i}\Phi_{\theta\gammabar}g^{\alpha\gammabar}g^{\theta\mubar}\Phi_{\rho\mubar, \jbar}g^{\rho\betabar}\right.\\
&\ \ \ \ \ +A_{\alpha\mubar,i}g^{\eta\mubar}A_{\eta\betabar, \jbar}\Phi_{\theta\gammabar}g^{\alpha\gammabar}g^{\theta\betabar}
+B_{\alpha\eta,\jbar}g^{\eta\mubar}\overline{B_{\mu\beta}}_{,i}\Phi_{\theta\gammabar}g^{\alpha\gammabar}g^{\theta\betabar}\\
&\ \ \ \ \ \ -\Phi_{\alpha\betabar,\jbar}\Phi_{\theta\gammabar}g^{\alpha\mubar}\Phi_{\nu\mubar,i}
					g^{\nu\gammabar}g^{\theta\betabar}
-\Phi_{\alpha\betabar}\Phi_{\theta\gammabar,\jbar}g^{\alpha\mubar}\Phi_{\nu\mubar,i}
					g^{\nu\gammabar}g^{\theta\betabar}\\
&\ \ \ \ \ \ +\Phi_{\alpha\betabar}\Phi_{\theta\gammabar}g^{\alpha\rhobar}\Phi_{\eta\rhobar,\jbar}g^{\eta\mubar}\Phi_{\nu\mubar,i}g^{\nu\gammabar}g^{\theta\betabar}
+\Phi_{\alpha\betabar}\Phi_{\theta\gammabar}g^{\alpha\mubar}\Phi_{\nu\mubar,i}
					g^{\nu\etabar}\Phi_{\zeta\etabar,\jbar}g^{\zeta\gammabar}g^{\theta\betabar}\\
&\ \ \ \ \ \  +\Phi_{\alpha\betabar}\Phi_{\theta\gammabar}g^{\alpha\mubar}\Phi_{\nu\mubar,i}
					g^{\nu\gammabar}g^{\theta\etabar}\Phi_{\zeta\etabar,\jbar}g^{\zeta\betabar}			-\Phi_{\alpha\betabar}\Phi_{\theta\gammabar}g^{\alpha\mubar}\Phi_{\nu\etabar,i}
					g^{\zeta\etabar}\Phi_{\zeta\mubar,\jbar}g^{\nu\gammabar}g^{\theta\betabar}\\
&\ \ \ \ \ \ \left. -\Phi_{\alpha\betabar}\Phi_{\theta\gammabar}g^{\alpha\mubar}B_{\nu\zeta,\jbar}
					g^{\zeta\etabar}{\overline{B_{\eta\mu}}}_i g^{\nu\gammabar}g^{\theta\betabar}\right)\\
&\ \ \ \ \ \  +(U_{\alpha\betabar}+F_{\alpha\betabar})\Phi_{\theta\gammabar}g^{\alpha\gammabar}g^{\theta\betabar}-\Phi_{\alpha\betabar}\Phi_{\theta\gammabar}g^{\alpha\mubar}(F_{\nu\mubar}+U_{\nu\mubar})g^{\nu\gammabar}g^{\theta\betabar}.
\end{align}
Then  under the assumption that $Q<1$ and (\ref{metricassumption}), there is a constant $\TCA$, so that
\begin{align}\label{estimateLQA}
LQ_A\geq &A_{\alpha\betabar,i}g^{\alpha\thetabar}A_{\mu\thetabar,\jbar} g^{\mu\betabar}(2-\TCA \sqrt{Q})L^{\ijbar}
  			-\TCA B_{\alpha\beta,\jbar}g^{\alpha\thetabar}\overline{B_{\theta\gamma}}_{,i} g^{\beta\gammabar} \sqrt{Q}L^{\ijbar}-\TCA (|F|+|U|)\sqrt{Q}.
\end{align}

The estimate for $LQ_G$ is more complicated, because we want $T$, defined in (\ref{expressT}),  to be the dominating term. The complete expression for $LQ_G$ is
\begin{align}
LQ_G=&\frac{1}{2}\Phi_{,\alpha i}\Phi_{,\betabar \jbar}g^{\alpha\betabar}L^{\ijbar}
				+\frac{1}{2}\Phi_{\alpha \jbar}\Phi_{\betabar i}g^{\alpha\betabar}L^{\ijbar}
					+S_\alpha \Phi_{\betabar}g^{\alpha\betabar}\\
&+\frac{1}{2}\left(\Phi_{,\alpha i}-2\Phi_{\eta}g^{\eta\gammabar}A_{\alpha\gammabar,i}\right)
      						\left(\Phi_{, \betabar \jbar}-2\Phi_\mubar g^{\theta\mubar}A_{\theta\betabar, \jbar} \right)
							 g^{\alpha\betabar}L^{\ijbar}\\
&+\frac{1}{2}\left(\Phi_{\alpha \jbar}-2\Phi_{\eta}g^{\eta\gammabar}A_{\alpha\gammabar,\jbar}\right)
      						\left(\Phi_{ \betabar i}-2\Phi_\mubar g^{\theta\mubar}A_{\theta\betabar,i} \right)
							 g^{\alpha\betabar}L^{\ijbar}\\
&-\left(\Phi_{\eta}g^{\eta\gammabar}A_{\alpha\gammabar,\jbar}\right)\left(\Phi_\mubar g^{\theta\mubar}A_{\theta\betabar,i} \right)g^{\alpha\betabar}L^{\ijbar}
-\left(\Phi_{\eta}g^{\eta\gammabar}A_{\alpha\gammabar,i}\right)\left(\Phi_\mubar g^{\theta\mubar}A_{\theta\betabar,\jbar} \right)g^{\alpha\betabar}L^{\ijbar}\\
&-\Phi_\alpha\Phi_\betabar g^{\alpha\gammabar}g^{\eta\betabar}\left(A_{\eta\thetabar,i}g^{\mu\thetabar}A_{\mu\gammabar,\jbar}+B_{\eta\mu}g^{\mu\thetabar}\overline{B_{\gamma\theta}}_{, i}+U_{\eta\gammabar}+F_{\eta\gammabar}\right)L^{\ijbar}.
\end{align}
In the following, we will simplify the expression above into an inequality. First, there is a constant $\TCG$, depending only on the dimension of $\MV$, such that
\begin{align}
LQ_G\geq &\frac{1}{2}\Phi_{,\alpha i}\Phi_{,\betabar \jbar}g^{\alpha\betabar}L^{\ijbar}\label{tem318}
				+\frac{1}{2}\Phi_{\alpha \jbar}\Phi_{\betabar i}g^{\alpha\betabar}L^{\ijbar}-\TCG Q(P+\frac{\epsilon b}{g} E)-\TCG  (|S|+|U|+|F|)\sqrt{Q},
\end{align}
providing that the condition (\ref{metricassumption}) is satisfied and $Q<1$.

In the estimate (\ref{tem318}), we want to replace
\[\frac{1}{2}\Phi_{,\alpha i}\Phi_{,\betabar \jbar}g^{\alpha\betabar}L^{\ijbar}
				+\frac{1}{2}\Phi_{\alpha \jbar}\Phi_{\betabar i}g^{\alpha\betabar}L^{\ijbar}\]
by a multiple of $$T=\Phi_{\alpha \tau}\Phi_{\betabar \taubar}g^{\alpha\betabar}
				+\Phi_{\alpha \taubar}\Phi_{\betabar \tau}g^{\alpha\betabar},$$ which is more convenient to use, as we saw in section \ref{sec:sec}.

Because $(L^{\ijbar})\geq (p^{\ijbar})$, we have
\begin{align}
&\frac{1}{2}\Phi_{,\alpha i}\Phi_{,\betabar \jbar}g^{\alpha\betabar}L^{\ijbar}
				+\frac{1}{2}\Phi_{\alpha \jbar}\Phi_{\betabar i}g^{\alpha\betabar}L^{\ijbar}\\
\geq&\frac{1}{2}\Phi_{,\alpha i}\Phi_{,\betabar \jbar}g^{\alpha\betabar}p^{\ijbar}
				+\frac{1}{2}\Phi_{\alpha \jbar}\Phi_{\betabar i}g^{\alpha\betabar}p^{\ijbar}\\
\geq&\frac{1}{2}\left(\Phi_{\alpha \tau}\Phi_{\betabar \taubar}g^{\alpha\betabar}
 												-\Phi_{\alpha \tau}\Phi_{,\betabar \gammabar}g^{\alpha\betabar}g^{\eta\gammabar}\Phi_{\eta\taubar}-\Phi_{,\alpha \rho}\Phi_{\betabar \taubar}g^{\alpha\betabar}g^{\rho\psibar}\Phi_{\tau\psibar}\right)\\
				&+\frac{1}{2}\left(\Phi_{\alpha \taubar}\Phi_{\betabar \tau}g^{\alpha\betabar}-\Phi_{\alpha \taubar}\Phi_{\betabar \rho}g^{\alpha\betabar}g^{\rho\etabar}\Phi_{\tau\etabar}-\Phi_{\alpha \zetabar}\Phi_{\betabar \tau}g^{\alpha\betabar}g^{\theta\zetabar}\Phi_{\theta\taubar}\right)\\
\geq& T\left(\frac{1}{2}-\HCG \sqrt{Q}\right),
\end{align}
for a constant $\HCG$ only depending on the dimension of $\MV$.
 Then plugging the estimate above into (\ref{tem318}), we get that 
\begin{align}\label{estimateLQG}
LQ_G\geq T\left(\frac{1}{2}-\HCG \sqrt{Q}\right)-\TCG Q(P+\frac{\epsilon b}{g} E)-\TCG  (|S|+|U|+|F|)\sqrt{Q},
\end{align}

Put (\ref{estimateLQB}) (\ref{estimateLQA}) and (\ref{estimateLQG}) together, we get that for $\TC=2(\TCB+\TCA+\TCG+\HCG)$
\begin{align}\label{estimateLQ}
LQ\geq \left(\frac{1}{2}-\TC \sqrt{Q}\right)\left(T+P+\frac{\epsilon b}{g}E\right)-\TC(|U|+|V|+|W|+|F|+|H|+|S|)\sqrt{Q},
\end{align}
providing $Q<1$ and the condition (\ref{metricassumption}) is satisfied. Plug in the estimate (\ref{sumcontrol}), we get, for $\const=(\TC+1)(\CR+1)2^{n+2}$
\begin{align}\label{estimateresult}
LQ\geq (\frac{1}{2}-\const \sqrt{Q})(T+P+\frac{\epsilon b}{g})-\const \epsilon Q.
\end{align}
So, if $Q\leq \frac{1}{4\const^2}$,
\begin{align} \label{LQinequal}
LQ\geq  -\const \epsilon  Q.
\end{align}
%%%%%%%%%%%%%%%%%%%%%%%%%%%%%%%%%%%%%%%%%%%%
%%%%%%%===================
\subsection{Estimating $Q$}\label{sec:Qestimate}
With the inequality (\ref{LQinequal}), we can derive the following apriori estimate for $Q$. Here we denote the diameter of  $\MR$ by $\diameterR$, and assume that $\MR$ is contained in the ball $\{|\tau|\leq \diameterR\}\subset \EC$.
\begin{Proposition}\label{Proposition1}
	There is a constant $\delta$ depending on the dimension, and the curvatures and the covariant derivatives of the curvatures of the manifold $\MV$, so that for a solution $\Phi$ to Problem \ref{ProblemDisc}, if  the function $Q$, defined by (\ref{eq:Qdef}), satisfies
$Q\leq \delta^2,$ and $\epsilon<\frac{\delta}{\diameterR^2}$,  then 
\begin{align}\label{boundarycontrol}
Q\leq 2\max _{\partial\MR\times \MV}Q,\text{\ \ \ \ \ in\  } \MR\times \MV.
\end{align}
\end{Proposition}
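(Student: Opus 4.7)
The plan is a standard maximum-principle argument against a linear barrier in $|\tau|^2$, applied to the differential inequality (\ref{LQinequal}). I would first check two preliminaries: that $L$ is elliptic (so the weak maximum principle applies) and that the hypothesis $Q \leq \delta^2$ already implies the metric assumption (\ref{metricassumption}) used to derive (\ref{LQinequal}).

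For the first, a direct computation from (\ref{eq:operatorcoef}) yields, for any vector $(v^\tau, v^\alpha)$,
\[
L^{i\jbar} v^i \overline{v^j} \;=\; \left| v^\tau - g^{\alpha\overline\theta}\Phi_{\tau\overline\theta}\, v^\alpha \right|^2 \;+\; \frac{\epsilon b}{g}\, g^{\alpha\overline\beta} v^\alpha \overline{v^\beta},
\]
which is strictly positive whenever $\epsilon > 0$ and $v \neq 0$. For the second, writing $A_{\alpha\overline\beta} = g_{\alpha\overline\beta} - b_{\alpha\overline\beta}$, one has $Q_A = \tr\bigl((I - g^{-1}b)^2\bigr)$, so $Q \leq \delta^2$ forces the eigenvalues of $g^{-1}b$ into $[1-\delta, 1+\delta] \subset [1/2, 3/2]$ once $\delta \leq 1/2$, which is exactly (\ref{metricassumption}). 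Consequently the inequality (\ref{LQinequal}), namely $LQ \geq -\const \epsilon Q$, is in force throughout $\MR \times \MV$ provided $\delta \leq 1/(2\const)$.

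The barrier is $|\tau|^2$. Since it is independent of the $\MV$-coordinates, $L(|\tau|^2) = L^{\tau\overline\tau} = 1$. Set $M := \max_{\overline\MR \times \MV} Q$ and $\tilde Q := Q + \const M \epsilon |\tau|^2$. Then
\[
L \tilde Q \;=\; LQ + \const M \epsilon \;\geq\; -\const \epsilon Q + \const M \epsilon \;\geq\; 0,
\]
so by the weak maximum principle $\tilde Q$ attains its maximum on $\partial \MR \times \MV$ (the $\MV$-factor being closed). Since $\tilde Q \geq Q$ pointwise,
\[
M \;\leq\; \max_{\overline\MR \times \MV} \tilde Q \;=\; \max_{\partial\MR \times \MV} \tilde Q \;\leq\; \max_{\partial\MR \times \MV} Q + \const M \epsilon \diameterR^2.
\]
Using the hypothesis $\epsilon < \delta / \diameterR^2$ together with $\delta \leq 1/(2\const)$ gives $\const \epsilon \diameterR^2 < 1/2$, and rearrangement yields $M \leq 2\max_{\partial\MR \times \MV} Q$, which is exactly (\ref{boundarycontrol}). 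Setting $\delta := \min\{1/2,\, 1/(2\const)\}$ completes the argument.

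I would expect no genuine obstacle in this proposition itself; the substantive work was the derivation of (\ref{LQinequal}) in Section \ref{sec:LQcompute}. The only mild points to pin down are the strict positivity of $(L^{i\jbar})$ for $\epsilon > 0$ (to invoke a non-degenerate maximum principle rather than a boundary-point lemma) and the linear-algebra extraction of (\ref{metricassumption}) from smallness of $Q_A$; both are routine.
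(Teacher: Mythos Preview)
Your argument is correct and complete, but the barrier you choose differs from the paper's. The paper uses a multiplicative cosine barrier: it sets
\[
u(\tau)=\cos\!\left(\frac{\pi\,\mathrm{Re}(\tau)}{4\diameterR}\right)\cos\!\left(\frac{\pi\,\mathrm{Im}(\tau)}{4\diameterR}\right),
\]
so that $Lu=-\frac{\pi^2}{32\diameterR^2}\,u$, and then applies the maximum principle to the quotient $Q/u$; the factor~$2$ in (\ref{boundarycontrol}) comes from $\tfrac12\leq u\leq 1$ on $\overline\MR$. You instead use an additive quadratic barrier $|\tau|^2$ with $L|\tau|^2=1$, apply the maximum principle to $Q+\const M\epsilon|\tau|^2$, and absorb the error via $\const\epsilon\diameterR^2<\tfrac12$. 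Your route is slightly more elementary (no auxiliary eigenfunction, and no quotient rule computation), while the paper's route is a standard ``first eigenfunction'' trick; both yield exactly the same dependence of $\delta$ on $\const$ and $\diameterR$ up to harmless constants.

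One small cosmetic slip: from $Q_A\leq\delta^2$ you get the eigenvalues of $g^{-1}b$ in $[1-\delta,1+\delta]$, but (\ref{metricassumption}) is the statement $\tfrac12 b<g<\tfrac32 b$, i.e.\ eigenvalues of $g^{-1}b$ in $(\tfrac23,2)$; so you actually want $\delta\leq\tfrac13$ rather than $\tfrac12$. This is harmless for the final choice $\delta=\min\{\tfrac13,\,1/(2\const)\}$.
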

\begin{proof}
	First, we require that $\delta<\frac{1}{4}$, then $Q<\delta^2$ implies $ Q<\frac{1}{10}$. So by (\ref{eq:Qdef}),  
	\begin{align}
	\Phi_{\abbar}\Phi_{\gamma\overline\theta}g^{\alpha\overline{\theta}}g^{\gamma\overline{\beta}}<\frac{1}{10}.\label{condition0627}
	\end{align}
At any point, we can choose local coordinates, so that 
\begin{align}
	\Phi_{\alpha\betabar}=\lambda_{\alpha}\delta_{\alpha\beta}, \ \ b_{\alpha\betabar}=\delta_{\alpha\beta}.
\end{align}
Then condition (\ref{condition0627}) implies that 
\begin{align}
	|\lambda_\alpha|<\frac{1}{3}, \ \ \text{ for all }\alpha=1, ..., n.
\end{align}This implies (\ref{metricassumption}).

Let $$u(\tau)=\cos\left(\frac{\pi \text{Re}(\tau)}{4\diameterR}\right)\cos\left(\frac{\pi 
	\text{Im}(\tau)}{4\diameterR}\right),$$
then 
\begin{align}
	\label{equationu0627}u_{\tau\taubar}=-\frac{\pi^2}{32\diameterR^2}u.
	\end{align}
We consider $u$ as a function defined on $\MR\times \MV$, by letting $u(\tau,z)=u(\tau)$. Then (\ref{equationu0627}) becomes
\begin{align}\label{temp0627}
	u_{\ijbar}L^{\ijbar}=-\frac{\pi^2}{32\diameterR^2}u.
\end{align} At an interior maximum point of $\frac{Q}{u}$, we have
\[Q_i=Q\frac{u_i}{u},  \text{ for any }i\in\{0,1,... n\},\]and by (\ref{LQinequal}) (\ref{temp0627})
\[0\geq \left(\frac{Q}{u}\right)_{\ijbar}L^{\ijbar}=\left(\frac{Q_{\ijbar}}{u}-\frac{Q u_{\ijbar}}{u^2}\right)L^{\ijbar}\geq \left(\frac{\pi^2}{32\diameterR}-\const\epsilon\right)\frac{Q}{u}.\]
If $\epsilon\leq \frac{\pi^2}{32\diameterR\const}$, then $\frac{Q}{u}$ cannot achieve interior maximum, except that $Q\equiv 0$.
  Thus, 
\[\frac{Q}{u}\leq \max_{\partial\MR\times \MV} \frac{Q}{u}, \text{\ \ \ \ \ in \ \ \ }\MR\times \MV.\] Because
 $1\geq u\geq\frac{1}{2}$, we  have 
\[ Q\leq 2 \max_{\partial\MR\times \MV} Q, \text{\ \ \ \ \ in \ \ \ }\MR\times \MV.\] 
So, we can let $\delta=\min\left\{\frac{\pi^2}{32 \const}, \frac{1}{4}\right\}$.
\end{proof}
Then with the previous proposition we can get an estimate for $Q$. 
%First, we would need the following lemma \begin{lemma}\label{LemmaMonotone}\end{lemma}
For $\lambda\in[0,1]$, let $\Phi^\lambda$ be the solution to Problem \ref{ProblemDisc}, with the boundary value condition being replaced by
\[\Phi^\lambda=\lambda \cdot F, \text{\ \ \ \ \ on \ }\partial \MR\times V.\]
Define 
\[g_{\alpha\betabar}^\lambda=b_{\alpha\betabar}+\Phi_{\alpha\betabar}^\lambda, \] and denote its inverse by 
$g_{\lambda}^{\alpha\betabar}$. Then we define 
\begin{align}\label{eq:Qlambdadef}
Q^\lambda
=\Phi^\lambda_{\covab}\overline{\Phi^\lambda_{,\gamma\theta}}g_\lambda^{\alpha\overline{\theta}}g^{\beta\overline{\gamma}}_\lambda+
		\Phi^\lambda_{\abbar}\Phi^\lambda_{\gamma\overline\theta}g^{\alpha\overline{\theta}}_\lambda g^{\gamma\overline{\beta}}_\lambda+
			\Phi^\lambda_{\alpha}\overline{\Phi^\lambda_{\beta}}g^{\alpha\overline \beta}_\lambda.
\end{align}
For each $(\tau, z)\in \MR\times\MV$, $Q^{\lambda}(\tau, z)$ is a non-decreasing function of $\lambda$, this can be proved by taking $\partial_\lambda$ derivatives of $Q^\lambda$. We assume that 
\begin{align}
\max_{\partial \MR\times\MV} Q\leq \frac{\delta^2}{2},
\end{align}
where the $\delta$ is from Proposition \ref{Proposition1}.
Then this implies, by the non-decreasing property, for each $\lambda\in [0,1]$, 
\begin{align}
\max_{\partial \MR\times\MV} Q^\lambda\leq  \frac{\delta^2}{2},
\end{align}
 We assume that $F\in C^{\infty}(\partial \MR\times \MV)$ and $\epsilon>0$, so all second order derivatives of $\Phi^{\lambda}$ are continuous with respect to $\lambda$. This can be proved with implicit function theorem. In particular, $\max_{\MR\times \MV}Q^\lambda$ is a continuous function of $\lambda$.  So, if $$\max_{ \MR\times \MV}Q^1=\max_{ \MR\times \MV}Q>2 \max_{\partial \MR\times \MV}Q,$$ there is a $\lambda\in(0,1)$, so that
\begin{align}
\delta^2\geq \max_{ \MR\times \MV}Q^\lambda>2 \max_{\partial \MR\times \MV}Q,
\end{align}because $Q^0=0.$
But, by Proposition \ref{Proposition1}, $\delta^2\geq \max_{ \MR\times \MV}Q^\lambda$ implies
\[\max_{ \MR\times \MV}Q^{\lambda}\leq 2\max_{\partial \MR\times \MV}Q^{\lambda}\leq 2\max_{\partial \MR\times \MV}Q,\]
which is a contradiction.
\begin{figure}[h]
	\centering
	\includegraphics[height=4.4cm]{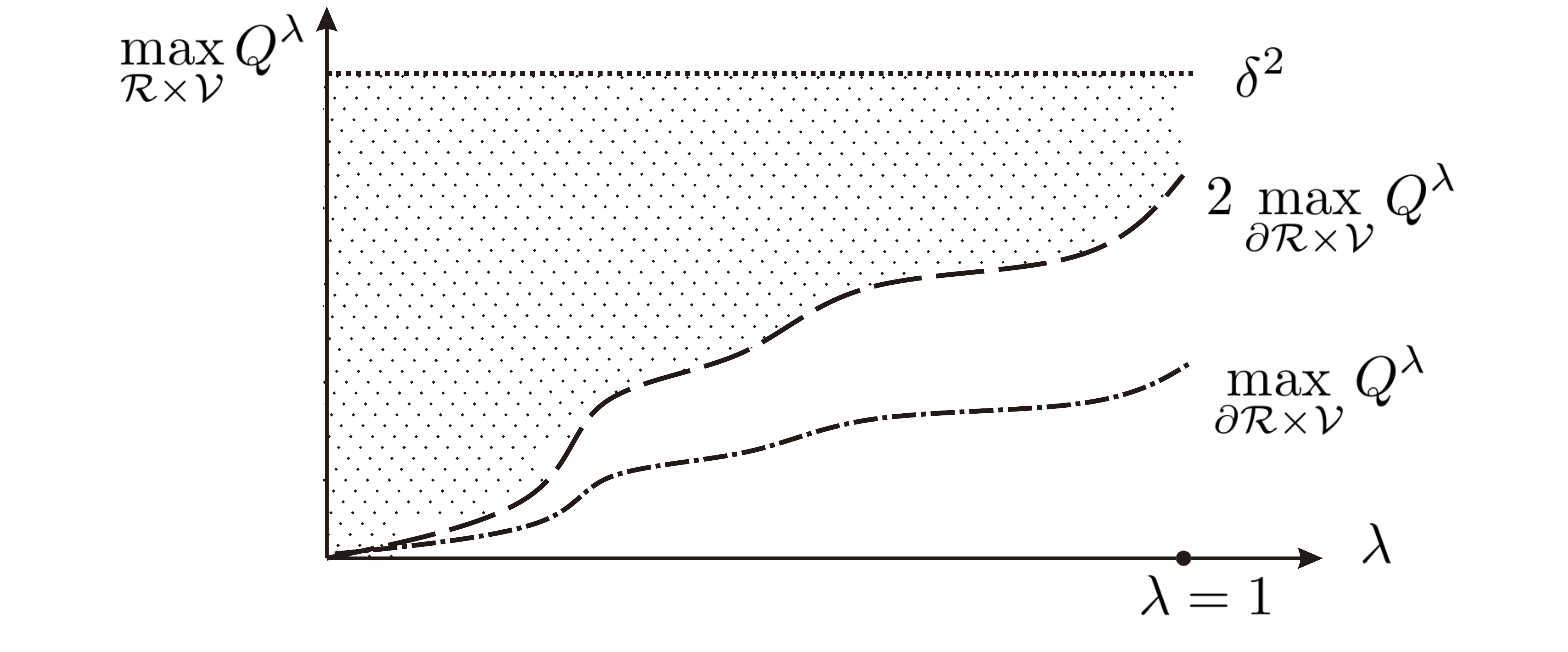}
	\caption{ Continuity Argument}
	\label{fig:Method_of_Continuity}
\end{figure}

The argument above can be illustrated by Figure \ref{fig:Method_of_Continuity}. By Proposition \ref{Proposition1}, for no $\lambda\in (0,1)$, $(\lambda, \max_{\MR\times \MV}Q^\lambda)$ can stay in the shadowed area. This is because Proposition \ref{Proposition1}
 says if $Q^\lambda<\delta^2$ in $\MR\times \MV$, then $\max_{\MR\times \MV}Q^\lambda\leq 2 \max_{\partial\MR\times \MV}Q^\lambda$.  
Therefor, since $\max_{\MR\times\MV}Q^\lambda$ is a continuous function of $\lambda$, with $\max_{\MR\times\MV}Q^0=0$, the curve
\begin{align}
	\{(\lambda, \max_{\MR\times\MV}Q^\lambda)|\lambda\in[0,1]\}
\end{align} has to stay below the shadowed area. So, for all $\lambda\in[0,1]$, 
 \[\max_{\MR\times\MV}Q^\lambda\leq 2 \max_{\partial\MR\times \MV}Q^\lambda\leq 2 \max_{\partial\MR\times \MV}Q^1\leq \delta^2. \]
 
As a conclusion, if for any $\tau\in\partial\MR$ the second order derivatives of $F(\tau, \ast)$ are small enough, so that \begin{align}
\max_{\partial \MR\times\MV} Q\leq \frac{\delta^2}{2},
\end{align} and $\epsilon<\frac{\delta}{10 \diameterR^2}$, we have $Q<{\delta}^2\leq \frac{1}{10}$ and so
\begin{align}
{\frac{1}{2}}(b_{\alpha\betabar})<(g_{\alpha\betabar})<{\frac{3}{2}}(b_{\alpha\betabar} ) 	.
\end{align}
Theorem \ref{TheoremDisc} is proved.
\begin{remark}
The $\theoremdelta$ of Theorem \ref{TheoremDisc} can be chosen as
\begin{align}
\theoremdelta=\min\left\{\frac{1}{C\cdot (1+\max|R|+\max|R|^2+\max|\nabla R|)}, 1/16\right\},
\end{align}
for some constant $C$ only depending on the dimension of the manifold $\MV$. 
\end{remark}

Now given two function $\varphi_1, \varphi_0\in \MH$, with small $C^2$ norm, and $\epsilon\in(0,\frac{\hat\delta}{10 \diameterR^2})$, we can use the theory of non-degenerate Monge-Amp\`ere equation, and get a solution $\Phi^\epsilon$ to Problem \ref{ProblemDisc}, with $\MR=\{\tau|1<|\tau|<e\}$, and 
\begin{align}
\Phi^\epsilon=\varphi_0, \text{\ on\ \ }\{|\tau|=1\}\times \MV;\\
\Phi^\epsilon=\varphi_1, \text{\ on\ \ }\{|\tau|=e\}\times \MV.
\end{align}
With Theorem \ref{TheoremDisc}, we know 
\begin{align}
\omega_0+\sqrt{-1}\ddbar\Phi^\epsilon(\tau, \ast)>\frac{1}{2}\omega_0, \ \ \ \ \text{ for any $\tau\in \MR$}.
\end{align}
When $\epsilon\rightarrow 0$, the sequence of $\Omega_0-$plurisubharmonic functions $\Phi^\epsilon$ monotone increase, and converge to a $\Omega_0-$plurisubharmonic function $\Phi$, in $C^1$ norm. Then $\Phi(e^\zeta, \ast)$ is a solution to Problem \ref{ProblemGeodesic} in the weak sense and it satisfies
\begin{align}
\omega_0+\sqrt{-1}\ddbar\Phi^\epsilon(e^\zeta, \ast)>\frac{1}{2}\omega_0, \ \ \ \ \text{ for any $\zeta\in \MS$},
\end{align}
weakly.  Theorem \ref{LowerBdGeodesic} is proved.
%%%%%%%
\appendix
%%%%%%%%%%%%%%%%%%%%%%%%%%%%%%%%x
\section{Estimates with 1-Dimensional Flat Torus}\label{sec:app}
In this appendix, we prove some estimates for solutions to Problem \ref{ProblemDisc},  in the situation that the K\"ahler manifold $(V, \omega_0)$ is a flat torus. The torus under consideration is $T=\EC/\sim$, where 
\[z\sim z+1, \ z\sim z+\lambda,\]
for $\lambda\in \EC$ and $\lambda\neq 1$,
 the metric on the torus is $\omega_0=\sqrt{-1}dz\wedge\dzbar$. On such a flat torus, we define
\begin{definition}
For a function $\varphi\in C^2(T)$, we say $\varphi$ is $\omega_0-$convex if 
\begin{align}\label{omegaconvexitydef}
\frac{|\varphi_{zz}|}{1+\varphi_{\zzbar}}<1,  \text{\ \ \ and   \ \ \ }1+\varphi_{\zzbar}>0.
\end{align}
\end{definition}
\begin{remark}Condition (\ref{omegaconvexitydef}) is equivalent to 
 \begin{align}
D^2\varphi+dz\otimes \dzbar+\dzbar\otimes dz>0,
\end{align}and it coincides with the concept of $g-$convexity defined in \cite{GTo}, if we let the $(V, \nabla, g)$ in \cite{GTo} be $(T, \partial, dz\otimes \dzbar+\dzbar\otimes dz)$.
\end{remark}

The results of this appendix are: Theorem \ref{preserveconvexity}, which says that if, for any $\tau\in\partial \MR$, $F(\tau, \ast)$ is $\omega_0-$convex, then for the solution $\Phi$ to Problem \ref{ProblemDisc}, $\Phi(\tau, \ast)$ is $\omega_0-$convex, for any $\tau\in \MR$; Theorem \ref{metriclowerbd},  if for all $\tau\in\partial \MR$, $F(\tau, \ast)$ is $\omega_0-$convex, then for all $\tau\in \MR$, the metric
\[\omega_0+\sqrt{-1}\ddbar\Phi(\tau, \ast)\] has a lower bound, independent of $\epsilon$; and Theorem \ref{upperbound},  for the solution $\Phi$ to Problem \ref{ProblemDisc}, $|\Phi_{zz}|$ and $\Phi_{z\overline z}$ has a very precise upper bound, independent of $\epsilon$. These results will be generalized to higher dimensional case in an upcoming paper. 

We start with
\begin{theorem}\label{preserveconvexity}Given $F\in C^{\infty}(\partial\MR\times T)$, if for all $\tau\in \partial \MR$, $F(\tau, \ast)$ is $\omega_0-$convex, then for $\Phi$, the solution to Problem \ref{ProblemDisc}, with boundary value $F$, $\Phi(\tau, \ast)$ is $\omega_0$-convex, for all $\tau\in\MR$.
\end{theorem}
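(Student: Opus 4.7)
The plan is to analyze the ``smallest $\omega_0$-eigenvalue'' function
\[
\lambda(\tau,z)=\bigl(1+\Phi_{z\bar z}(\tau,z)\bigr)-\bigl|\Phi_{zz}(\tau,z)\bigr|,
\]
which is positive precisely when $\Phi(\tau,\cdot)$ is $\omega_0$-convex at $z$. By hypothesis $\lambda>0$ on the compact set $\partial\MR\times T$; the goal is to propagate this positivity to the interior via a maximum-principle argument combined with the integration constraint that is available because the fiber $T$ is closed.

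First I would specialize the identities of Section \ref{sec:sec} to this setting. Because $n=1$, the tensor-valued terms $F$ of (\ref{expressF}) and $H$ of (\ref{expressH}) vanish identically; because $\omega_0$ is flat, the curvature-dependent terms $U,V,W$ of Section \ref{sec:sec} likewise vanish. The equations (\ref{equationA})--(\ref{equationBbarsum}) therefore collapse to the clean identities
\[
L^{i\bar j}A_{z\bar z,i\bar j}=g^{-1}\bigl(L^{i\bar j}A_{z\bar z,i}A_{z\bar z,\bar j}+L^{i\bar j}B_{zz,\bar j}\overline{B_{zz,i}}\bigr),\qquad L^{i\bar j}B_{zz,i\bar j}=2g^{-1}L^{i\bar j}B_{zz,\bar j}A_{z\bar z,i}.
\]

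Next I would argue by contradiction. Suppose $\lambda$ attains an interior minimum $m_0\le 0$ at some point $(\tau_0,z_0)\in\MR\times T$. Then $|B(\tau_0,z_0)|=g(\tau_0,z_0)-m_0\ge g(\tau_0,z_0)>0$, so we may pick $\theta_0\in\mathbb{R}$ with $e^{-2i\theta_0}B(\tau_0,z_0)$ real and positive. Define the smooth function on $\MR\times T$:
\[
\tilde f(\tau,z)=\bigl(1+\Phi_{z\bar z}(\tau,z)\bigr)-\mathrm{Re}\bigl(e^{-2i\theta_0}\Phi_{zz}(\tau,z)\bigr).
\]
Since $\mathrm{Re}(e^{-2i\theta_0}B)\le|B|$ pointwise, $\tilde f\ge\lambda$ with equality at $(\tau_0,z_0)$, so $\tilde f$ also achieves its minimum $m_0$ at $(\tau_0,z_0)$ while $\tilde f\ge\lambda>0$ on $\partial\MR\times T$. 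A Cauchy--Schwarz estimate for the Hermitian form $L^{i\bar j}$ applied to the simplified equations gives $L\tilde f=g^{-1}\bigl(\alpha+\gamma-2\mathrm{Re}(e^{-2i\theta_0}\delta)\bigr)\ge 0$, where $\alpha=L^{i\bar j}A_{z\bar z,i}A_{z\bar z,\bar j}$, $\gamma=L^{i\bar j}B_{zz,\bar j}\overline{B_{zz,i}}$, $\delta=L^{i\bar j}A_{z\bar z,i}B_{zz,\bar j}$, with $|\delta|^2\le\alpha\gamma$. I would then use the strong maximum principle, combined with the envelope structure $\tilde f\ge\lambda$ (with equality only at the interior minima of $\lambda$), plus real-analyticity of smooth solutions to the elliptic MA equation, to conclude that $\tilde f\equiv m_0$ on the connected space $\MR\times T$.

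Finally I would derive the contradiction by integrating $\tilde f\equiv m_0$ over a fiber $\{\tau\}\times T$. Because $\Phi$ is smooth and periodic on $T$, $\int_T\Phi_{z\bar z}\,dA=0$ and $\int_T\Phi_{zz}\,dA=0$, so
\[
m_0\cdot\mathrm{Vol}(T)=\int_T\tilde f\,dA=\int_T(1+\Phi_{z\bar z})\,dA-\mathrm{Re}\!\left(e^{-2i\theta_0}\!\int_T\Phi_{zz}\,dA\right)=\mathrm{Vol}(T),
\]
forcing $m_0=1$, a contradiction with $m_0\le 0$.

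The main obstacle is the step that forces $\tilde f$ to be constant. Although $L\tilde f\ge 0$ holds pointwise, the strong maximum principle for subsolutions governs interior \emph{maxima} rather than minima, so one cannot directly conclude $\tilde f\equiv m_0$ just from $\tilde f$ having an interior minimum. A more refined argument is needed: one option is to introduce a strictly $L$-superharmonic $\tau$-barrier and propagate strict inequalities, another is to pass to a viscosity-solution formulation for the concave eigenvalue functional $\lambda$. Once the identity $\tilde f\equiv m_0$ is in hand, however, the integration-by-parts contradiction is robust and depends only on the compactness of $T$ together with the periodicity of $\Phi$.
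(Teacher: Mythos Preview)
Your proposal has a genuine gap that you correctly identify but do not close: the inequality $L\tilde f\ge 0$ makes $\tilde f$ a \emph{subsolution}, and the strong maximum principle for subsolutions rules out interior \emph{maxima}, not minima. A subharmonic function may perfectly well attain an interior minimum (think of $|z|^2$ on the disc), so from $L\tilde f\ge 0$ together with an interior minimum you cannot conclude that $\tilde f$ is constant. Neither of your suggested fixes repairs this. A $\tau$-barrier $u>0$ with $Lu<0$ yields, at an interior critical point of $\tilde f/u$, the identity $L(\tilde f/u)=\bigl(L\tilde f-(\tilde f/u)Lu\bigr)/u$; this is positive only when $\tilde f/u>0$, so it forbids interior maxima in the region $\{\tilde f>0\}$ and again gives an \emph{upper} bound on $\tilde f$, not the lower bound you need. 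Real-analyticity would only propagate a local identity $\tilde f\equiv m_0$ once you had one, but you have no local identity to propagate. Your final integration step is correct and elegant, but it is unreachable.

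The paper sidesteps the sign problem by working with the scale-invariant ratio $Q=|\Phi_{zz}|^2/(1+\Phi_{z\bar z})^2$, which satisfies $Q<1$ exactly when $\Phi(\tau,\cdot)$ is $\omega_0$-convex. Using the same pair of equations you wrote down, it shows that for each large $K$ the function $e^{KQ}$ is $L$-subharmonic throughout the region $\{Q<\sigma_2(K)\}$, where $\sigma_2(K)\uparrow 1$ as $K\to\infty$. Now the maximum principle points the right way: it bounds $Q$ \emph{from above} by its boundary values. Because this a~priori estimate is valid only where $Q<\sigma_2$, the argument is closed by a continuity method in the boundary data, replacing $F$ by $\lambda F$ for $\lambda\in[0,1]$ and noting that $\max_{\MR\times T}Q^\lambda$ starts at $0$, varies continuously in $\lambda$, and can never enter the forbidden interval $(\max_{\partial\MR\times T}Q,\sigma_2)$. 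Your eigenvalue $\lambda=g-|B|$ equals $g(1-\sqrt{Q})$; passing to $Q$ is precisely what aligns the direction of the subharmonicity estimate with the direction of the inequality you want.
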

\begin{proof}In the following, we denote
\begin{align}
b=\Phi_{zz},\ \ \  \ \ \ a=\Phi_{\zzbar}.
\end{align}
With $(V, \omega_0)$ being a flat torus, the Monge-Amp\`ere equation becomes 
\begin{align}
(1+\Phi_{\zzbar})\Phi_{\tautaubar}-\Phi_{\tau\zbar}\Phi_{z\taubar}=\epsilon.
\end{align} Computations in section \ref{sec:sec} give equations for $a, b$:
\begin{align}\label{equationa}
h^{\ijbar}a_{\ijbar}=\left(\frac{a_ia_{\jbar}}{1+a}+\frac{b_\jbar\bbar_i}{1+a}\right)h^{\ijbar},
\end{align}
\begin{align}\label{equationb}
h^{\ijbar} b_{\ijbar}=h^{\ijbar}\frac{2b_\jbar \bbar_i}{1+a}.
\end{align}
\begin{remark}
Comparing with section \ref{sec:sec}, the computation here is much simpler.  $H, F, U, V, W$ all vanish, because the dimension of $\MV$ is 1, and $\omega_0$ is flat.
\end{remark}
In this appendix, we define 
\begin{align}\label{definitionQappendix}
Q=\frac{b\bbar}{(1+a)^2},
\end{align}
and show that for a very large constant $K$, 
\begin{align}h^{\ijbar}\left(e^{KQ}\right)_{\ijbar}\geq 0.\end{align} 
Here,
\begin{align}
(h_{i\jbar})=\left(\begin{array}{cc}\Phi_{\tau\taubar}& \Phi_{\tau\zbar}\\
																								\Phi_{	z\taubar}&1+\Phi_{z\zbar}\end{array}\right),
\end{align}
and
\begin{align}
(h^{ j\ibar})=\frac{1}{\epsilon}\left(\begin{array}{cc}1+\Phi_{z\zbar}& -\Phi_{\tau\zbar}\\
																									-\Phi_{z\taubar}&\Phi_{\tau\taubar}\end{array}\right),
\end{align}
where $i$ is the row index and $j$ is the column index. 
Direct computation gives,
\begin{align}
&h^{\ijbar}\left(e^{KQ}\right)_{\ijbar}\\
=&Ke^{KQ}h^{\ijbar}\left[\left(\frac{b_i}{a+1}-\frac{2a_i b}{(1+a)^2}\right)
																\left(\frac{\bbar_\jbar}{a+1}-\frac{2a_\jbar \bbar}{(1+a)^2}\right)
			+\frac{b_{\jbar}\bbar_i}{(1+a)^2}(1-2Q)\right.\\
    &\ \ \ \ \ \ \ \ \ \ \ \left. +K\left(\frac{b_i\bbar}{(1+a)^2}+\frac{b\bbar_i}{(1+a)^2}-\frac{2Qa_i}{1+a}\right)
    \left(\frac{\bbar_\jbar b}{(1+a)^2}+\frac{\bbar b_\jbar}{(1+a)^2}-\frac{2Qa_\jbar}{1+a}\right)\right].
\end{align}
We denote 
\begin{align}
b_i-\frac{2a_ib}{1+a}=\beta_i,
\end{align}
and
\begin{align}
M=\left(\begin{array}{cc}1+KQ& K\frac{\bbar^2}{(1+a)^2}\\
											K\frac{b^2}{(1+a)^2}		& 1-2Q+KQ\end{array}\right),
\end{align}
then
\begin{align}\label{temp0604a14}
h^{\ijbar}\left(e^{KQ}\right)_{\ijbar}=Ke^{KQ}h^{\ijbar}(\beta_i, \bbar_i) M (\overline{\beta_j}, b_\jbar)^T.
\end{align}
We want to choose a large enough $K$ so that $M\geq 0$, which implies $\text{(\ref{temp0604a14})}\geq 0$. First, we need $K>2$, which implies that the diagonal elements of $M$ are positive. Then we make $K$ larger, so that $$\det(M)=2K\left[-Q^2+(1-\frac{1}{K})Q+\frac{1}{2K}\right]$$ is also positive. As $K\rightarrow \infty$, two roots of 
\begin{align}\label{Kpolynomial}-Q^2+(1-\frac{1}{K})Q+\frac{1}{2K}
\end{align} converge to $0$ and $1$. We need to choose $K$ large enough, so that the larger one of two roots
\begin{align}\label{Kcondition}\sigma_2=\frac{1-\frac{1}{K}+\sqrt{1+\frac{1}{K^2}}}{2}>\max_{\partial \MR\times T}Q.
\end{align} Because  $\sigma_2<1$, for any $K>0$, so (\ref{Kcondition}) can only be achieved under the condition that
\begin{align}
\max_{\partial \MR\times T}Q<1. \end{align}
Since the smaller one of two roots of (\ref{Kcondition})
\begin{align}\sigma_1=\frac{1-\frac{1}{K}-\sqrt{1+\frac{1}{K^2}}}{2}
\end{align}  is always negative,  we have showed that if $0\leq Q<\sigma_2$, in $\MR\times \MV$, then $M\geq 0$ and
\begin{align}
h^{\ijbar}\left(e^{KQ}\right)_{\ijbar}\geq 0.
\end{align}Because $e^{KQ}$ is a strictly monotone increasing function of $Q$, for positive $K$, the maximum principle for Laplace equation tells 
\begin{align}
Q\leq \max_{\partial\MR\times T}Q, \text{\ \ \ \ \ in \ }\MR\times T.\label{Prop1AppendixA}
\end{align}
	The estimate above is an analog of Proposition \ref{Proposition1}. Form here, we can use a continuity argument to show that if 
\begin{align}Q<1, \text{\ \ \ \ on \ }\partial\MR\times T, \label{Qsmaller1appendixA}
	\end{align}
then
\[Q<1, \text{\ \ \ \ in \ } \MR\times T. \]The argument is in the following. 
Similar to the proof in section \ref{sec:Qestimate}, for $\lambda\in [0,1]$, let $\Phi^\lambda$ be the solution to Problem \ref{ProblemDisc}, with the boundary condition (\ref{ProblemDiscBdCondidtion}) being replaced by 
\begin{align}
	\Phi^\lambda=\lambda  F, \ \ \ \text{ on }\partial \MR\times T.
	\end{align}
	Define 
	\[Q^\lambda=\frac{|\Phi^\lambda_{zz}|^2}{(1+\Phi^\lambda_{\zzbar})^2}.\]
	By taking $\partial_\lambda$ derivatives, we can show that for any fixed $(\tau, z)\in \partial\MR\times T$, $Q^\lambda$ is a non-decreasing function of $\lambda$. This implies, $\max_{\partial\MR\times T} Q^\lambda$ is a non-decreasing function of $\lambda$, so, if (\ref{Qsmaller1appendixA}) is satisfied, then  for any $\lambda\in [0,1]$, 
	\begin{align}
	\max_{\partial\MR\times T} Q^\lambda<1.
	\end{align}
We choose $K$ large enough so that 
\begin{align}\max_{\partial\MR\times T}Q<\sigma_2<1.
	\end{align}
 \begin{figure}[h]
 	\centering
 	\includegraphics[height=4.4cm]{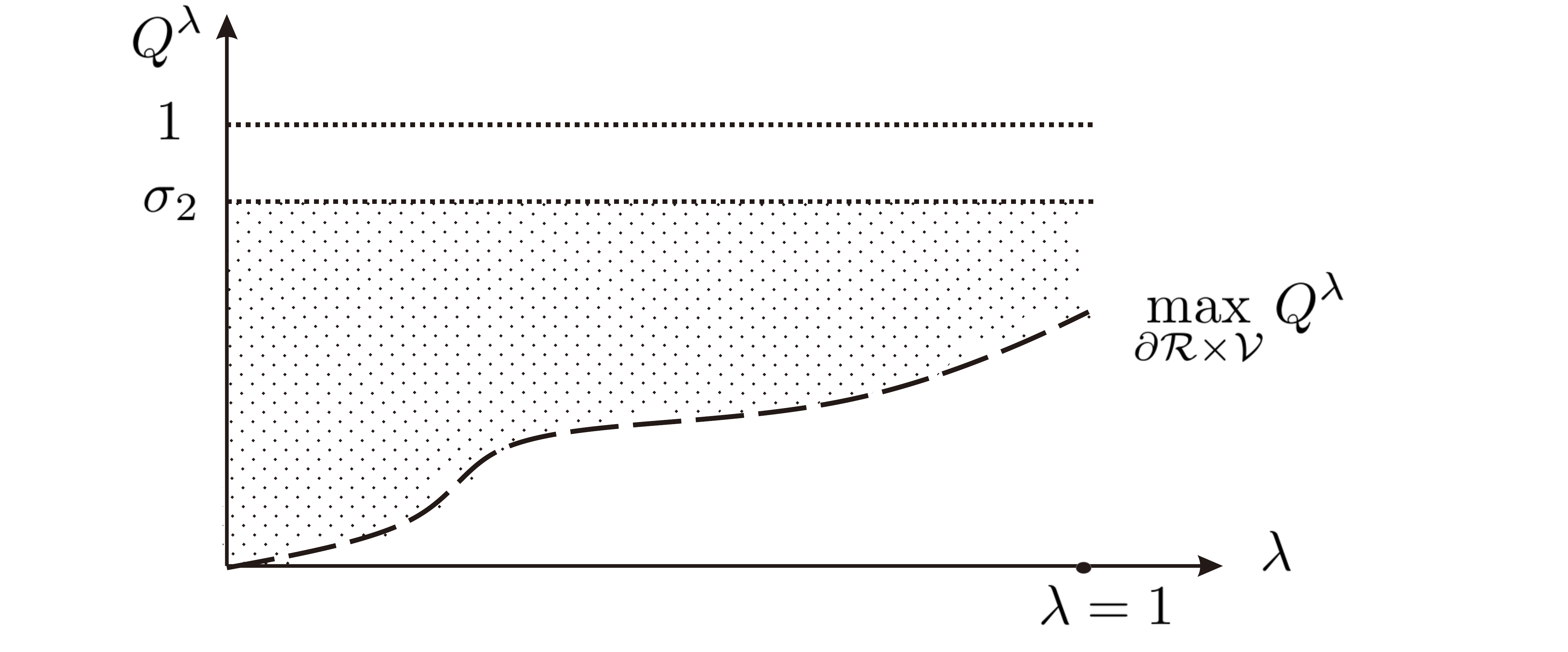}
 	\caption{Continuity Method with $\MV$ being a Torus}
 	\label{fig:Method_of_ContinuityAppendix}
 \end{figure}
Then, by estimate (\ref{Prop1AppendixA}), for any $\lambda\in (0,1)$, 
\begin{align}
	\max_{\MR\times T} Q^\lambda\notin(\max_{\partial \MR\times T}Q^\lambda, \sigma_2), \label{Rangeof }
\end{align}
i.e. as illustrated by Figure \ref{fig:Method_of_ContinuityAppendix}, $(\lambda, \max_{\MR\times T}Q^\lambda)$ cannot stay in the dotted area. Since  $\max_{\MR\times T}Q^\lambda$ is a continuous function of $\lambda$, and $\max_{\MR\times T}Q^0=0$, we know 
\begin{align}
	\max_{\MR\times T}Q=\max_{\MR\times T}Q^1\leq \sigma_2<1.
\end{align}
\end{proof} 

In the same way, we have estimates
\begin{align}
\frac{|\Phi_{zz}-\gamma|^2}{(1+\Phi_{z\zbar})^2}<1, \label{tempA21June42022}
\end{align}with $\gamma\in \EC$ being small enough. By using these estimates,  we can get the following metric lower bound estimate independent of $\epsilon$.
\begin{theorem}\label{metriclowerbd}Given $F\in C^{\infty}(\partial\MR\times T)$, if for all $\tau\in \partial \MR$, $F(\tau, \ast) $ is $\omega_0$-convex, then there is a constant $\delta>0$, so that the solution $\Phi$ to Problem \ref{ProblemDisc} with boundary value $F$ satisfies
\begin{align}
\partial_{\zzbar}\Phi(\tau, \ast)+1>\delta, \text{\ \ \ \ \ for all }\tau\in\MR.
\end{align}
$\delta$ only depends on the second order derivatives of $F(\tau, \ast)$, for all $\tau $ in $  \MR$.
\end{theorem}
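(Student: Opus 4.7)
The plan is to exploit the uniform-in-$\gamma$ estimate (\ref{tempA21June42022}), which asserts that $|\Phi_{zz}-\gamma|^2<(1+\Phi_{z\bar z})^2$ holds throughout $\MR\times T$ for every constant $\gamma\in\EC$ with $|\gamma|$ sufficiently small. The desired lower bound on $1+\Phi_{z\bar z}$ will then be extracted by choosing $\gamma$ in the direction opposite to $\Phi_{zz}$ and letting $|\gamma|$ approach its maximum allowed value.

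First I quantify how small $\gamma$ may be taken. Compactness of $\partial\MR\times T$ and smoothness of $F$, combined with the $\omega_0$-convexity of each $F(\tau,\ast)$, yield an $\eta>0$ with $|F_{zz}|\leq(1-\eta)(1+F_{z\bar z})$ on $\partial\MR\times T$. Set $\delta_0:=\eta\cdot\min_{\partial\MR\times T}(1+F_{z\bar z})>0$. For $|\gamma|<\delta_0$ the triangle inequality gives $|F_{zz}-\gamma|<1+F_{z\bar z}$ on $\partial\MR\times T$, so rerunning the proof of Theorem \ref{preserveconvexity} with $b-\gamma$ in place of $b$ — legitimate because $\gamma$ is constant, so (\ref{equationa}) and (\ref{equationb}) persist after rewriting $b_{\bar j}\bar b_i = (b-\gamma)_{\bar j}\overline{(b-\gamma)}_i$ — yields $|\Phi_{zz}-\gamma|<1+\Phi_{z\bar z}$ everywhere in $\MR\times T$.

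With this uniform estimate the conclusion is immediate. Fix $(\tau_0,z_0)\in\MR\times T$ and write $\mu:=1+\Phi_{z\bar z}(\tau_0,z_0)$, $b_0:=\Phi_{zz}(\tau_0,z_0)$. For every $|\gamma|<\delta_0$, choose $\gamma$ antiparallel to $b_0$ (any direction if $b_0=0$), so that $|b_0-\gamma|=|b_0|+|\gamma|$. The interior estimate gives $|b_0|+|\gamma|<\mu$ for all such $\gamma$; letting $|\gamma|\nearrow\delta_0$ yields $\delta_0\leq\mu$, i.e.\ $1+\Phi_{z\bar z}(\tau_0,z_0)\geq\delta_0$. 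Thus $\delta=\delta_0$ works and depends only on the second-order data of $F$ on $\partial\MR$.

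The main obstacle is to rigorously justify (\ref{tempA21June42022}) with a uniform $\delta_0$. The pointwise computation behind Theorem \ref{preserveconvexity} is essentially unaltered after relabelling $b\mapsto b-\gamma$: the matrix $M$ in (\ref{temp0604a14}) takes the same form with $b$ replaced by $b-\gamma$, and positivity reduces to controlling $\tilde Q:=|b-\gamma|^2/(1+a)^2$ by the same $\sigma_2<1$. The delicate point is the accompanying continuity argument in $\lambda F$: one must verify that $\lambda F(\tau,\ast)$ remains $\omega_0$-convex with a margin uniform in $\lambda\in[0,1]$. A direct computation shows the quantity $(1+\lambda F_{z\bar z})-|\lambda F_{zz}-\gamma|$ is bounded below by $1-\lambda+\lambda\eta(1+F_{z\bar z})-|\gamma|$, which exceeds $\min(1,\eta m)-|\gamma|$ on $[0,1]$ with $m:=\min_{\partial\MR\times T}(1+F_{z\bar z})$; this is strictly positive precisely when $|\gamma|<\delta_0$. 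Hence the continuity argument of Theorem \ref{preserveconvexity} applies verbatim for any fixed $\gamma$ in this disk, closing the argument.
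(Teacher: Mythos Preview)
Your proof is correct and follows essentially the same route as the paper: shift $b=\Phi_{zz}$ by a constant $\gamma$, observe that equations (\ref{equationa})--(\ref{equationb}) are unchanged, rerun Theorem~\ref{preserveconvexity} to obtain $|\Phi_{zz}-\gamma|<1+\Phi_{z\bar z}$ for every $\gamma$ in a small disk, and then extract the lower bound on $1+\Phi_{z\bar z}$. The only cosmetic difference is in the last step: the paper fixes the two real values $\gamma=\pm\delta$ and adds the resulting inequalities via the triangle inequality $2\delta=|(\Phi_{zz}+\delta)-(\Phi_{zz}-\delta)|\le|\Phi_{zz}+\delta|+|\Phi_{zz}-\delta|\le 2(1+\Phi_{z\bar z})$, whereas you choose $\gamma$ antiparallel to $\Phi_{zz}$ at each point and pass to the limit $|\gamma|\nearrow\delta_0$. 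Your explicit check that $(1+\lambda F_{z\bar z})-|\lambda F_{zz}-\gamma|>0$ uniformly in $\lambda\in[0,1]$ is a useful addition, since the boundary quantity $Q_\gamma^\lambda$ is not monotone in $\lambda$ once $\gamma\neq 0$, so the monotonicity step in the original continuity argument needs exactly this replacement.
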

\begin{proof}
For all $\gamma\in\EC$, define 
\[Q_{\gamma}=\frac{|\Phi_{zz}-\gamma|^2}{(1+\Phi_{\zzbar})^2}.\]
Then computations show that
\[h^{\ijbar}\left(e^{KQ_\gamma}\right)_{\ijbar}\geq 0,\]
providing that $K$ is big enough and $Q_\gamma<1$. The computations are the same as that of Theorem \ref{preserveconvexity}, we only need to replace $b$ by $\Phi_{zz}-\gamma$. This is because equations (\ref{equationa}) and (\ref{equationb}) only contain derivatives of $b$, so adding a constant to $b$ does not affect the equations.

Because for each $\tau\in \partial \MR$, $F(\tau, \ast)$ is $\omega_0-$convex, we have
\[\max_{\partial \MR\times T}\frac{|F_{zz}|^2}{(1+F_{\zzbar})^2}<1.\]
Since $\partial\MR\times T$ is a compact set, we can find  $\delta>0$, only depending on 
\[\max_{\partial \MR\times T}\frac{|F_{zz}|^2}{(1+F_{\zzbar})^2}\ \ \ \ \ \text{and\ }\ \ \ 
\min_{\partial \MR\times R}|1+F_{\zzbar}|,
\]
so that for all $\gamma\in\EC$, with $|\gamma|\leq\delta$, 
\[\max_{\partial \MR\times T}\frac{|F_{zz}-\gamma|^2}{(1+F_{\zzbar})^2}<1.\]
Then, using a continuity method argument, similar to that of section \ref{sec:Qestimate} or the proof of Theorem \ref{preserveconvexity}, we know
\[\max_{ \MR\times T}\frac{|\Phi_{zz}-\gamma|^2}{(1+\Phi_{\zzbar})^2}<1, \ \ \ \ \text{for all }\gamma, \text{ with }|\gamma|\leq\delta.\]
In particular, we have
\begin{align}|\Phi_{zz}-\delta|\leq 1+\Phi_{\zzbar},\ \ \ \ \text{ in } \MR\times T, \label{tempjune4bx1}\end{align}
\begin{align}|\Phi_{zz}+\delta|\leq 1+\Phi_{\zzbar},\ \ \ \ \text{ in } \MR\times T.\label{tempjune4bx2}\end{align}
Adding (\ref{tempjune4bx1}) and (\ref{tempjune4bx2}) together, we get
\[1+\Phi_{\zzbar}>\delta,\ \ \ \ \text{ in } \MR\times T.\]
\end{proof}
The results and ideas in Theorem \ref{preserveconvexity} and Theorem \ref{metriclowerbd} can be illustrated by Figure \ref{fig:PositionofBoundaryCondition}. First, we define 
\begin{align}
	\ML=\{(b,a)|b\in \EC, a\in(-1, \infty)\}.
\end{align}
 For $\tau\in\MR$, we define a jet map,  $J_\tau:T\rightarrow \ML,$
\[J_\tau(z)=(\Phi_{zz}(\tau, z), \Phi_{\zzbar}(\tau, z)),\]and $J_\tau$ can also be considered as a map from $ \MR\times T$ to $\ML,$
\[J(\tau, z)=J_{\tau}(z).\]
For each $\gamma\in \EC$, define a cone
\[C_{\gamma}=\{(b, a)| b\in \EC, a>-1, |b-\gamma|< a+1\}.\]
In particular, $C_0=\{(b,a)| b\in \EC, a>-1, |b|<a+1\}$.  Theorem \ref{preserveconvexity} says that if 
\[\text{Image($J_\tau$)}\subset C_0,\ \ \  \ \ \ \text{for any $\tau\in\partial \MR$},\] then 
\[\text{Image($J_\tau$)}\subset C_0,\ \ \ \ \ \ \ \text{for any $\tau\in \MR$}.\]
 Theorem \ref{metriclowerbd} says that since \[J(\partial\MR\times V)=\bigcup_{\tau\in\partial\MR}\text{Image($J_\tau$)}\subset C_0,\] there will be a positive distance in between $J(\partial\MR\times V)$ and $\partial C_0$ then,  for $\gamma$ with $|\gamma|\leq \delta$,
\[\text{Image($J_\tau$)}\subset C_\gamma,\ \ \ \ \ \ \ \text{for any $\tau\in \partial\MR$}\]
and so
\[\text{Image($J_\tau$)}\subset C_\gamma,\ \ \ \ \ \ \ \text{for any $\tau\in \MR$}.\]
This is equivalent to
\[J(\MR\times V)=\bigcup_{\tau\in \MR}\text{Image($J_\tau$)}\subset\bigcap_{\gamma, |\gamma|\leq \delta}C_\gamma,\]
which gives the metric lower bound, because 
\[\bigcap_{\gamma, |\gamma|\leq \delta}C_\gamma\subset \{a+1>\delta\}.\]
\begin{figure}[h]
\centering
\includegraphics[height=5.4cm]{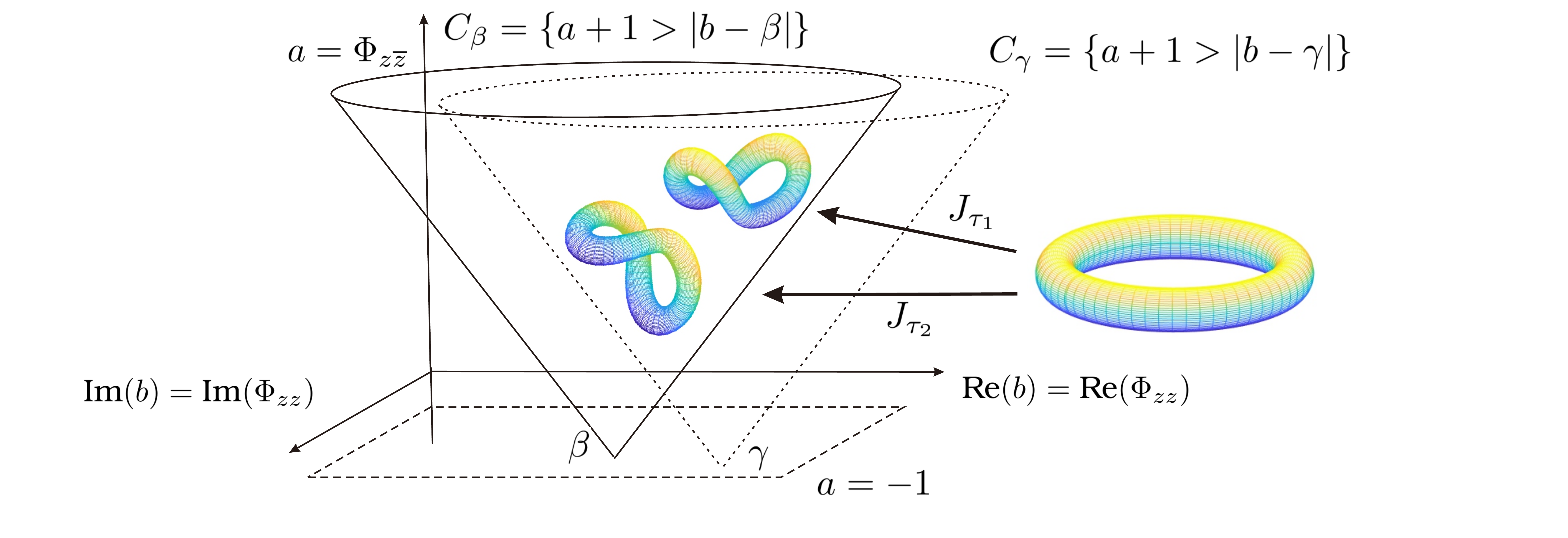}
\caption{Jet Maps}
\label{fig:PositionofBoundaryCondition}
\end{figure}

\begin{remark}
The construction of the function $Q$ is inspired by two facts. First, a quadratic polynomial 
\[P(z)=(1+a)z\zbar+\frac{1}{2}\left(bz^2+\overline{bz^2}\right)\]
is convex if and only if $|b|<1+a$. Second,  the space $\ML$ can be given a Lorentzian metric 
\begin{align}\label{LorentzPoincareMetric}
ds^2=\frac{-da^2+db\overline{db}}{(1+a)^2}. \end{align}With this metric, the cone $C_0$ is a convex set
which means that every geodesic  with ends in $C_0$ stays in $C_0$, this is explained in \cite{Nomizu} and the appendix of \cite{HuIMRN}. It's also an interesting fact that  with the metric (\ref{LorentzPoincareMetric}) the complement of $C_0$, which we denote by $C_0^c,$ is also convex, in the sense that if two ends of a geodesic segment stay in $C_0^c$, then this geodesic  stays in $(C_0)^c$. This fact leads to the following theorem.
\end{remark}
%It's also an interesting fact, that if $Q>1$, we can choose $P\in \ER^+$, large enough, so that
%\[h^{\ijbar}\left(e^{-PQ}\right)_{\ijbar}\geq 0.\]And this would lead to the following upper bound estimate for $|\Phi_{zz}|$ and $\Phi_{\zzbar}$.
\begin{theorem}\label{upperbound}Given $F\in C^{\infty}(\partial\MR\times T)$, let 
\begin{align}S=\max_{\partial\MR\times T}\left(F_{z\zbar}+1+|F_{zz}|\right)
\label{DefS}
\end{align}then for the solution $\Phi$ to Problem \ref{ProblemDisc}, we have 
\[|\Phi_{zz}|\leq S-1-\Phi_{\zzbar},\ \ \ \ \ \ \text{ in\ \  }\MR\times T.\]
\end{theorem}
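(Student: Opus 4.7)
I would follow the template of Theorems \ref{preserveconvexity} and \ref{metriclowerbd}: combine a maximum-principle estimate for a suitable auxiliary quantity with the continuity method in $\lambda\in[0,1]$ rescaling the boundary data to $\lambda F$, exactly as in section \ref{sec:Qestimate}. The goal is to bound the quantity $P := |b| + 1 + a$ above by its boundary value $S$, where as before $a=\Phi_{z\zbar}$ and $b=\Phi_{zz}$.

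A direct computation using equations (\ref{equationa}) and (\ref{equationb}) gives
\[
h^{\ijbar}P_{\ijbar} = \frac{\rho + Y\bigl(1 + 2\,\mathrm{Re}(b/|b|)\bigr)}{1+a} + \frac{X+Y - 2\,\mathrm{Re}(\bar t^{\,2}A')}{4|b|},
\]
with $t := b/|b|$, $\rho := h^{\ijbar}a_ia_\jbar$, $X := h^{\ijbar}b_i\bbar_\jbar$, $Y := h^{\ijbar}b_\jbar\bbar_i$, and $A' := h^{\ijbar}b_ib_\jbar$. Cauchy--Schwarz $|A'|^2 \leq XY$ together with AM--GM makes the second term nonnegative, and the first term is nonnegative whenever $\mathrm{Re}(b)/|b| \geq -\tfrac12$. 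In that angular regime the maximum principle immediately yields $P \leq \max_{\partial\MR\times T} P = S$.

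To cover the remaining angular range I would invoke the remark preceding the theorem: the set $D_S := \{|b|+1+a\leq S\}$ is to be realized, via the Lorentzian structure of $\ML$, as the intersection of preserved regions built from translated cones $C_\gamma := \{|b-\gamma|<1+a\}$, whose complements are geodesically convex in the metric (\ref{LorentzPoincareMetric}). Since equations (\ref{equationa})--(\ref{equationb}) are invariant under $b\mapsto b-\gamma$ for any constant $\gamma\in\EC$, the proof of Theorem \ref{preserveconvexity} applies verbatim to the shifted variable and gives preservation of each $C_\gamma$ from boundary to interior. Intersecting these preservation statements over a suitable parameter family of $\gamma$ is meant to yield $P\leq S$ in $\MR\times T$, and the continuity argument from section \ref{sec:Qestimate}, applied to $\Phi^\lambda$ solving Problem \ref{ProblemDisc} with boundary data $\lambda F$, then promotes the conclusion from $\lambda=0$ (where $P\equiv 1$) to $\lambda=1$.

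The main obstacle is making this Lorentzian-geometric picture computationally concrete: finding an explicit exponential auxiliary function $e^{K\tilde Q}$, with $\tilde Q$ a rational combination of $|b|^2$, $(1+a)^2$ and possibly $|b-\gamma|^2$, whose $h^{\ijbar}$-Laplacian admits an analog of the matrix identity leading to (\ref{temp0604a14}), and verifying positive semi-definiteness of the analog of $M$ under threshold conditions analogous to (\ref{Kcondition}). The ``bad'' angular range $\mathrm{Re}(b)/|b| < -\tfrac12$, where the direct computation above fails, is precisely where the Lorentz-convexity of $C_0^c$ has to do the real work.
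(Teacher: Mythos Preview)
Your proposal has a genuine gap in the cone-intersection step. You write that $D_S=\{|b|+1+a\leq S\}$ should be realized as an intersection of translated cones $C_\gamma$, and that ``the proof of Theorem \ref{preserveconvexity} applies verbatim to the shifted variable and gives preservation of each $C_\gamma$ from boundary to interior.'' But no family of cones $C_\gamma$ intersects to $D_S$; intersecting cones over $|\gamma|\leq\delta$ produces sets of the type $\{1+a>\delta\}$ (this is exactly Theorem \ref{metriclowerbd}), never an upper bound on $|b|+1+a$. What does give $D_S$ is the intersection of cone \emph{complements}: $\bigcap_{|\eta|>S} C_\eta^c=\{|b|\leq S-1-a\}$. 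And for $|\eta|>S$ the boundary data lie in $C_\eta^c$, i.e.\ $Q_\eta>1$ on $\partial\MR\times T$, so the argument of Theorem \ref{preserveconvexity}---which needs $Q_\gamma<1$ on the boundary---does not apply verbatim.

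The missing ingredient is a \emph{minimum} principle for $Q_\eta$ in the regime $Q_\eta>1$. The paper obtains this by flipping the sign in the exponential: one computes
\[
h^{\ijbar}\bigl(e^{-PQ_\eta}\bigr)_{\ijbar}
=\frac{Pe^{-PQ_\eta}}{(1+a)^2}\,h^{\ijbar}(\beta_i,\bbar_i)\,N\,(\overline{\beta_j},b_\jbar)^T,
\qquad
N=\begin{pmatrix} PQ_\eta-1 & P\dfrac{\overline{(b-\eta)}^{\,2}}{(1+a)^2}\\[1ex]
P\dfrac{(b-\eta)^2}{(1+a)^2} & (P+2)Q_\eta-1\end{pmatrix},
\]
and $\det N=2P\bigl(Q_\eta^2-(1+\tfrac1P)Q_\eta+\tfrac1{2P}\bigr)$ is positive precisely when $Q_\eta$ exceeds the larger root $\sigma_2$, which can be pushed below $\min_{\partial\MR\times T}Q_\eta$ by taking $P$ large. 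Thus $e^{-PQ_\eta}$ is subharmonic where $Q_\eta>\sigma_2$, the continuity method then gives $Q_\eta\geq\min_{\partial\MR\times T}Q_\eta>1$ throughout, and intersecting over all $|\eta|>S$ yields the claim. This is the concrete auxiliary function you were looking for; your direct attack on $|b|+1+a$ and its angular obstruction can be discarded.
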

\begin{remark}
This implies $|\Phi_{zz}|\leq S$ and $\Phi_{\zzbar}\leq S-1$.
\end{remark}
\begin{proof} By the definition of $S$ (\ref{DefS}), for any $\eta\in \EC$, with $|\eta|>S$, 
\[\frac{|F_{zz}-\eta|}{1+F_{\zzbar}}>1.\]Computations analogous to those for Theorem \ref{preserveconvexity}  give us:
\begin{align}
&h^{\ijbar}\left(e^{-PQ_{\eta}}\right)_{\ijbar}=Pe^{-PQ_\eta} h^{\ijbar}(\beta_i, \bbar_i)N(\overline{\beta_j}, b_{\jbar})'\frac{1}{(a+1)^2},
\end{align}
where we denote 
\[\beta_i=b_i-\frac{2a_i(b-\eta)}{1+a},\]
and 
\begin{align}
N=\left(\begin{array}{cc}P Q_{\eta}-1&P\frac{{\overline{(b-\eta)}}^2}{(1+a)^2}\\
											P\frac{{{(b-\eta)}}^2}{(1+a)^2}	& (P+2)Q_\eta-1\end{array}\right).
\end{align}
If we choose $P>1$, then since $Q_\eta>1$, we get the diagonal elements of $N$ are positive. We also need to make $\det N>0$, which  then implies $N>0$. 
\[\det N=2P\left(Q_\eta^2-(1+\frac{1}{P})Q_\eta+\frac{1}{2P}\right).\]
We choose $P$ large, so that the bigger one of two roots of 
$Q_\eta^2-(1+\frac{1}{P})Q_\eta+\frac{1}{2P}$
\[\sigma_2=\frac{1}{2}(1+\frac{1}{P}+\sqrt{1+\frac{1}{P^2}})< \min_{\partial \MR\times T}\frac{|F_{zz}-\eta|^2}{(1+F_{\zzbar})^2}.\]
Then, for $Q_\eta>\sigma_2$, we have
\[h^{\ijbar}\left(e^{-PQ_{\eta}}\right)_{\ijbar}\geq 0.\]Using an analogous continuity method argument in section \ref{sec:Qestimate},  we get 
\[\frac{|\Phi_{zz}-\eta|^2}{(1+\Phi_{\zzbar})^2}=Q_\eta\geq \min_{\partial \MR\times T} Q_\eta=\min_{\partial \MR\times T}\frac{|F_{zz}-\eta|^2}{(1+F_{\zzbar})^2}>1.\]
This says that for any $\eta$, with $|\eta|>S$, 
\[J(\MR\times T)\subset C_\eta^c.\] So
\[J(\MR\times T)\subset \bigcap_{\eta, |\eta|>S}C_\eta^c=\{(b, a)| a+1>0, |b|<S-a-1\}.\]
Now we proved 
\[|\Phi_{zz}|\leq S-\Phi_{\zzbar}-1.\]
\end{proof}

\section{Computation Along the Leaf}\label{sec:Leaf}
In this appendix, we show how the computations in section \ref{sec:sec} and \ref{sec:subharm} work in the limiting case of Problem \ref{ProblemDisc} when $\epsilon=0$, or Problem \ref{ProblemHomoDisc}.  The manifold $V$ is of arbitrary dimension.
 
 To carry out the computation, it's necessary to make the following assumptions:
 \begin{enumerate}
 	\item $\Phi\in C^4( \MR\times \MV),$%\cap C^2(\overline \MR\times \MV)$
 	\item $\omega_0+\sqrt{-1}\ddbar \Phi(\tau, \ast)>0$.
 \end{enumerate}
However these assumptions may not be satisfied in general as shown by \cite{LempertLizVivas}\cite{LempertDarvas}\cite{RossNystrom} and \cite{HuIMRN}. So the computations are formal. But the computations in this section provide a guideline for Section \ref{sec:sec} and \ref{sec:subharm}.

According to \cite{DonaldsonHolomorphicDiscs}, under  assumptions 1 and 2, kernels of $\Omega_0+\sqrt{-1}\ddbar\Phi$ form a foliation in $\MR\times V$, and at a point $(\tau_0, p_0)\in \MR\times \MV$, the leaf is locally the graph of a holomorphic map, $f: \MU\rightarrow \MV$, where $\MU$ is an open set in $\MR$, $ \MU\ni \tau_0$ and $f(\tau_0)=p_0.$

In the case that $\MR$ is a disc, each leaf is also a disc, and $f$ can be defined on $\MR$. However, when $\MR$ is not simply connected, a leaf is a covering of $\MR$, of finite or infinite index. More information on the foliation structure can be found in \cite{DonaldsonHolomorphicDiscs}
and \cite{ChenTian}.

In the following we will do computations locally on an open set of a leaf: \[L\triangleq \{(\tau,f(\tau))|\tau\in U\}.\]
Denote the projection $\MR\times \MV\rightarrow \MR$ by $\pi_\MR$, then $\pi^\ast_\MR \tau\triangleq \X$ is a complex coordinate on $L$.
Actually $\X=\pi_\MR$, if we consider $\pi_\MR$ as a map to $\EC\ (\EC\supset\MR)$. Because the tangent vector of the leaf lies in the kernel of $\Omega_0+\sqrt{-1}\ddbar\Phi$, we have
\[\partial_\X=\partial_\tau-\Phi_{\tau\betabar}g^{\alpha\betabar}\partial_\alpha.\]
Consider $A=\Phi_{\alpha\betabar}dz^\alpha\otimes  dz ^{\betabar}$ and 
$B=\Phi_{\alpha\beta}dz^\alpha\otimes dz^{\beta}$ as sections of  bundles $i^\ast\pi_V^\ast(T^{\ast 1,0}(V)\otimes T^{\ast 0,1}(V))$ and $i^\ast\pi_V^\ast(T^{\ast 1,0}(V)\otimes T^{\ast 1,0}(V))$ over $L$. Here $i$ is the inclusion map $L\hookrightarrow \MR\times V$. 

With these notations, equations (\ref{equationA}) (\ref{equationBsum}) and (\ref{equationBbarsum}) become
\begin{align}
	A_{\alpha\betabar, \X\Xbar}=A_{\alpha\thetabar,\X}g^{\mu\thetabar}A_{\mu\betabar,\Xbar}+B_{\alpha\theta,\Xbar}g^{\theta\gammabar}\overline{B_{\gamma\beta, \Xbar}}+U_{\alpha\betabar};\\
	B_{\alpha\beta, \X\Xbar}=A_{\alpha\thetabar,\X}g^{\mu\thetabar}B_{\mu\beta,\Xbar}+B_{\alpha\theta,\Xbar}g^{\theta\gammabar}{A_{\beta\gammabar, \X}}+V_{\alpha\beta};\\
\overline{B_{\alpha\beta}}_{,\X\Xbar}=A_{\theta\alphabar,\Xbar}g^{\theta\mubar}\overline{B_{\mu\beta,\Xbar}}+\overline{B_{\alpha\theta}}_{,\X}g^{\gamma\thetabar}{A_{\gamma\betabar, \Xbar}}+W_{\alphabar\betabar}.
\end{align}
Terms $F, H$ in (\ref{equationA}) (\ref{equationBsum}) and (\ref{equationBbarsum}) vanish because $\epsilon=0$. See the expression of $F, H$ (\ref{expressF}) and (\ref{expressH}).
It's convenient to consider $A, B, U, V, W$ as matrices, then we can use matrix multiplication to simplify the equations to:
\begin{align}
	A_{,\X\Xbar}=A_\cX G^{-1} A_{\cXbar}+ B_{,\Xbar}\overline{G^{-1}B_{\cXbar}}+U,\label{eq:appB:A}\\
	B_{,\X\Xbar}=A_\cX G^{-1} B_{\cXbar}+B_{\cXbar}\overline{G^{-1}A_{\cXbar}}+V,\label{eq:appB:B}\\
	\overline{B}_{,\X\Xbar}=\overline{B}_\cX G^{-1}A_{\cXbar}+\overline{A}_\cXbar
	\overline{G}^{-1}\overline{B}_\cX+W.\label{eq:appB:Bbar}
	\end{align}
In the equations above, $G=(g_{\alpha\betabar})$ and $G^{-1}=(g^{\beta\alphabar})$, where $\alpha$ is the row index and $\beta$ is the column index. With matrix multiplications, the quantity $Q_B$, introduced in (\ref{expressQB}), is 
\begin{align}
	Q_B=\tr\left({B\overline{G^{-1}B}G^{-1}}\right).  \label{expressQBmatrix}
\end{align}

In the case that curvatures of $(\MV, \omega_0)$ are $0$, terms $U, V, W$ all vanish and computations will be largely simplified. We choose a coordinate at the point of computation, so that $g_{\alpha\betabar}=\delta_{\alpha\betabar}$ and the Christoffel symbols are zero.  Then apply $\X $ and $\Xbar$ to (\ref{expressQBmatrix}). By Leibniz rule and equations (\ref{eq:appB:A})-(\ref{eq:appB:Bbar}), we get
\begin{align}
	&(Q_B)_{\X\Xbar}\\
	=\tr&\left(B_\Xbar\Abar_\X\Bbar+A_\X B_\Xbar\Bbar
	                    -B_\X\Abar_\Xbar \Bbar+B_\X\Bbar_\Xbar-B_\X \Bbar A_\Xbar  -B_\Xbar\Abar_\X\Bbar\right.
	                                \label{2401}\\
	                 &   +B\Abar_\Xbar\Abar_\X\Bbar
	                       -B\Abar_\Xbar\Abar_\X\Bbar-B\Bbar_\X B_\Xbar \Bbar+B\Abar_\X\Abar_\Xbar\Bbar-B\Abar_\X \Bbar_\Xbar+B\Abar_\X\Bbar A_\Xbar
	                              \label{2402}\\
	                  &+B_\Xbar \Bbar_\X-B\Abar_\Xbar\Bbar_\X +B\Bbar_\X A_\Xbar
	                  +B\Abar_\Xbar \Bbar_\X-B\Bbar_\X A_\Xbar-B_\Xbar\Bbar A_\X
	                  \label{2403}\\
	                  &\left.+B\Abar_\Xbar \Bbar A_\X -B\Bbar_\Xbar A_\X+B\Bbar A_\Xbar A_\X-B\Bbar A_\X A_{\Xbar} +B\Bbar A_\X A_\Xbar-B\overline{B} B_\Xbar\Bbar_\X\right)
	             \label{2404}.
\end{align}
There are 12 terms canceling each other:
\begin{align}
(\ref{2401})_2+(\ref{2401})_6=0, 
\ (\ref{2401})_1+(\ref{2403})_6=0,
\ (\ref{2403})_2+(\ref{2403})_4=0,  \label{firstrow}
\end{align} 
\begin{align}
\ (\ref{2403})_3+(\ref{2403})_5=0, 
\ (\ref{2402})_1+(\ref{2402})_2=0, 
\ (\ref{2404})_4+(\ref{2404})_5=0.   \label{secondrow}
\end{align}
Here we are using a notation mentioned at (\ref{termnotation}), term $(\ast.\ast)_k$ stands for the $k-$th term in $(\ast.\ast)$, for example $(\ref{2401})_6=-\tr(B_\Xbar\Abar_\X\Bbar)$.
For the first two equalities of (\ref{firstrow}), we used that transposing a matrix does not change its trace and for matrices $A_1, A_2, ... A_k$, 
\begin{align}
	\tr{(A_1 A_2\cdot\cdot\cdot A_k )}=\tr{(A_k A_1 A_2\cdot\cdot\cdot A_{k-1})}.
\end{align}
So 
\begin{align}
	(\ref{2401})_2=\tr(A_XB_{\Xbar}\overline{B})=\tr[(A_XB_{\Xbar}\overline{B})^T]=\tr(\Bbar B_{\Xbar} \Abar_X)=\tr(B_{\Xbar} \Abar_X\Bbar )=-(\ref{2401})_6.
\end{align}
The second equality of (\ref{firstrow}) can be derived similarly, and the last equality of (\ref{firstrow}) and equalities of (\ref{secondrow}) are trivial.
Then we reorganize the remaining terms together and get
\begin{align}\label{LaplaceQBideal}
	(Q_B)_{\X\Xbar}
	=\tr\left(B_\Xbar\Bbar_\X-2\Bbar B \Bbar_X B_\Xbar\right)+ \tr\left[\left(B_\X-A_\X B-B \Abar_\X\right)\left(\Bbar_\Xbar-\Abar_\Xbar \Bbar-\Bbar A_\Xbar\right)\right].
\end{align}
So $(Q_B)_{\X\Xbar}\geq 0$, providing the maximal eigenvalue of $B\Bbar<\frac{1}{2}$ (or the maximal eigenvalue of $B\overline{G^{-1}B}G^{-1}<\frac{1}{2}$, without the assumption that $(g_{\alpha\betabar})$ is diagonal).   This result will be improved  to $B\Bbar<1$ in an upcoming paper.

When the curvatures of $(\MV, \omega_0)$ are not zero, terms $U, V, W$ are involved and (\ref{LaplaceQBideal}) becomes
\begin{align}
	(Q_B)_{\X\Xbar}
	=&\tr\left(B_\Xbar\Bbar_\X-2\Bbar B \Bbar_X B_\Xbar\right)+ \tr\left[\left(B_\X-A_\X B-B \Abar_\X\right)\left(\Bbar_\Xbar-\Abar_\Xbar \Bbar-\Bbar A_\Xbar\right)\right]
	\label{negativeterms0624-1}\\
	&+\tr\left(V \overline B+BW-B\overline{U}\Bbar-B\Bbar U\right). \label{negativeterms0624-2}
\end{align}
To control (\ref{negativeterms0624-2}), $Q_A, Q_G$ (defined in (\ref{expressQA}) (\ref{expressQG})) need to be introduced. With the notation of this appendix, 
\begin{align}
	Q_A=\tr(AG^{-1}A G^{-1}).
\end{align}In addition, $S$, which is  a section of $i^\ast \pi^\ast_\MV T^{\ast 1,0}(\MV)$,  also gets involved when computing $(Q_G)_{\X\Xbar}$. Here, with $\epsilon=0$, equation (\ref{expressS}) becomes
 \begin{align}
 	S_\theta=\Phi_{\theta,\X\Xbar}={R_{\zetabar\theta\mu}}^\psi \Phi_\psi g^{\mu\betabar} \Phi_{\tau\betabar}g^{\alpha\zetabar}\Phi_{\alpha\taubar}.
 \end{align}

In the following, we want to derive an estimate for $Q_{X\Xbar}$, with $Q=Q_A+Q_B+Q_G$.
First, we transform (\ref{negativeterms0624-1}) (\ref{negativeterms0624-2}) into an inequality. For a constant $C$ depending on dim($\MV$), providing $Q<\frac{1}{2}$, we have
\begin{align}\label{EstimateQBappendixB}
	(Q_B)_{\X\Xbar}\geq (1-2Q)\tr \left(\Bbar_\X B_{\Xbar}\right)-C(|V|+|U|+|W|)\sqrt{Q}.
\end{align}
As introduced in Section \ref{sec:notation},  $|\ast|$ is the norm of a tensor, with respect to the metrics induced by $\omega_0$. Similarly, for $Q_A$ and $Q_G$, we have that
\begin{align}\label{EstimateQAappendixB}
	(Q_A)_{\X\Xbar}\geq (2-C \sqrt{Q}) \tr \left(A_\X A_{\Xbar}\right)-C|U|\sqrt{Q}-C\sqrt{Q}\ \tr \left(\Bbar_\X B_{\Xbar}\right),
\end{align}
and 
\begin{align}\label{EstimateQGappendixB}
	(Q_G)_{\X\Xbar}\geq \frac{1}{2}\left(\Phi_{\theta, \X}\Phi_{\thetabar, \Xbar}
	                                                             +  \Phi_{\thetabar, \X}\Phi_{\theta, \Xbar}
	                                                               \right)-C(|S|+|U|)\sqrt{Q}.
\end{align}
To control $|S|, |U|, |V|, |W|$, we define
\begin{align}
	\MP=B_{\alpha\beta, \Xbar}\overline{B_{\theta\gamma, \Xbar}}g^{\alpha\thetabar}g^{\beta\gammabar}
			+A_{\theta\betabar, \X}{A_{\alpha\gammabar, \Xbar}}g^{\alpha\betabar}g^{\theta\gammabar}
			+\Phi_{\theta,\X}\Phi_{\gammabar,\Xbar}g^{\theta\gammabar}
			+\Phi_{\theta,\Xbar}\Phi_{\gammabar, \X}g^{\theta\gammabar}.
\end{align}
At the point of computation, where $g_{\ijbar}=\delta_{\ijbar}$, 
\begin{align}
	\MP=\tr(B_{\Xbar}\overline{B}_{ \X}
	+A_{ \X}{A_{\Xbar}})
	+\Phi_{\theta\X}\Phi_{\thetabar\Xbar}
	+\Phi_{\theta\Xbar}\Phi_{\thetabar \X}.
\end{align}
 $\MP$ defined here is roughly a combination of $P$ and $T$, defined in (\ref{expressP}) and (\ref{expressT}). It's easy to see that 
\begin{align}
	\MP\geq P+  (1-C\sqrt{Q}) T.
\end{align}
So, when $Q$ is small enough, we can simplify  (\ref{sumcontrol}) and get the following estimates for $|U|, |V|, |W|, |S|$
\begin{align}
	|U|+|V|+|W|+|S|\leq C_R  \MP.
\end{align} Here $C_R$ is a constant depending on curvatures of $\MV$ and their covariant derivatives.
Then combining  (\ref{EstimateQBappendixB}) (\ref{EstimateQAappendixB}) and (\ref{EstimateQGappendixB}) together, we find, when $Q$ is smaller than a dimensional constant, 
\[Q_{\X\Xbar}\geq \frac{1}{2}\MP-C_R \sqrt{Q}\ \MP.\]
When $Q$ is smaller than a curvature related constant, we have $Q_{\X\Xbar}\geq 0$. This leads to an upper bound for $Q$, and so  an upper bound for $|D^2\Phi(\tau, \ast)|_{\omega_0}$, $\tau\in\MR$.
%%%%%%%%%%%%%%%%%%%%%%%%%%%%%%%%%%%%%
%%%%%%%%%%%%%%%%%%%%%%%%%%%%%%%%%%%%
\section*{Acknowledgement}
This work is supported by the National Natural Science Foundation of China (No. 12288201) and the Project of Stable Support for Youth Team in Basic Research Field, CAS, (No. YSBR-001). The author would like to thank  Xiuxiong Chen, Laszlo Lempert and Bing Wang for helpful discussions and their suggestions on improving the paper.
% He also wants to thank the anonymous referee for his/her many insightful and thoughtful suggestions on improving the paper.
%Jianchun Chu, Guohuan Qiu

%%%%%%%%%%%%%%%%%%%%%%%%%%%%%%%%%%%%%%%%%%%%%%%

{\flushleft Jingchen Hu}\\
Hua Loo-Keng Center for Mathematical Sciences, Chinese Academy of Sciences\\
Email:
JINGCHENHOO@GMAIL.COM

\end{document}